\theoremstyle{plain}
\newtheorem{theorem}[subsection]{{\bf Theorem}}
\newtheorem{corollary}[subsection]{{\bf Corollary}}
\newtheorem{proposition}[subsection]{{\bf Proposition}}
\newtheorem{lemma}[subsection]{{\bf Lemma}}
\theoremstyle{remark}
\newtheorem{remark}[subsection]{{\it Remark}}
\newtheorem{example}[subsection]{{\it Example}}
\numberwithin{equation}{subsection}
\def \d {\mathrm d}
\def\X{\mathfrak C}
\def\Y{\mathfrak A}
\newcommand{\Aut}{\mathrm{Aut}}
\def \Z {\mathbb Z}
\def \Aut{ \mathrm {Aut}}
\newcounter{ithmcount}
\newenvironment{ithm}{\begin{list}{{\rm \alph{ithmcount})}}{\usecounter{ithmcount}\labelwidth18pt
      \leftmargin18pt \topsep3pt \itemsep1pt \parsep2pt}}{\end{list}}
\def\@author#1{\g@addto@macro\elsauthors{\normalsize%
    \def\baselinestretch{1}%
    \upshape\authorsep#1\unskip\textsuperscript{%
      \ifx\@fnmark\@empty\else\unskip\sep\@fnmark\let\sep=,\fi
      \ifx\@corref\@empty\else\unskip\sep\@corref\let\sep=,\fi
      }%
    \def\authorsep{\unskip,\space}%
    \global\let\@fnmark\@empty
    \global\let\@corref\@empty 
    \global\let\sep\@empty}%
    \@eadauthor={#1}
}
\journal{Journal of Algebra}
\begin{document}

\begin{frontmatter}

\title{Groups in which every non-abelian subgroup is self-centralizing}
\author[CD]{Costantino Delizia\corref{cor1}}
\ead{cdelizia@unisa.it}

\author[HD]{Heiko Dietrich\fnref{ackHD}}
\ead{heiko.dietrich@monash.edu}

\author[PM]{Primo\v z Moravec\fnref{ackPM}}
\ead{primoz.moravec@fmf.uni-lj.si}

\author[CD]{Chiara Nicotera}
\ead{cnicoter@unisa.it} 

\cortext[cor1]{Corresponding author }
\fntext[ackHD]{Dietrich was supported by an ARC DECRA (Australia), project DE140100088, and  by a Go8-DAAD Joint Research 
Co-operation Scheme, project ``Groups of Prime-Power Order and Coclass  Theory''.}
\fntext[ackPM]{Moravec was supported by ARRS (Slovenia), projects P1-0222 and J1-5432.}

\address[CD]{University of Salerno, Italy}
\address[HD]{Monash University, Melbourne, Australia}
\address[PM]{University of Ljubljana, Slovenia}

\date{}

\begin{abstract}
We study groups having the property that every non-abelian subgroup contains its centralizer. We describe various classes of infinite groups in this class, and address a problem of Berkovich regarding the classification of finite $p$-groups with the above property.
\end{abstract}

\begin{keyword}
centralizer\sep non-abelian subgroup\sep self-centralizing subgroup
\end{keyword}

\end{frontmatter}

\section{Introduction}
\label{s:intro}

A subgroup $H$ of a group $G$ is {\it self-centralizing} if the centralizer $C_G(H)$ is contained in $H$. Clearly, an abelian subgroup $A$ of $G$ is self-centralizing if and only if $C_G(A)=A$. In particular, the trivial subgroup of $G$ is self-centralizing if and only if $G$ is trivial. 

The structure of groups in which many non-trivial subgroups are self-centralizing has been studied in several papers.
In \cite{Del13} it has been proved  that a locally graded group (that is, a group in which every non-trivial finitely generated subgroup has a proper subgroup of finite index) in which all non-trivial subgroups are self-centralizing has to be finite; therefore it has to be  cyclic of prime order or a non-abelian group whose order is a product of two different primes. 
The papers \cite{Del13} and \cite{Del15} deal with the class $\X$ of  groups in which every non-cyclic subgroup is self-centralizing; in particular, a complete classification of locally finite $\X$-groups is given.

In this paper, we study the class $\Y$ of  groups in which every non-abelian subgroup is self-centralizing.
We note that the class $\Y$ is fairly wide. Clearly, it contains all $\X$-groups. It also contains the class of commutative-transitive groups (that is, groups in which the centralizer of each non-trivial element is abelian), see \cite{Wu98}. Moreover, by definition, the class $\Y$ contains all minimal non-abelian groups (that is, non-abelian groups in which every proper subgroup is abelian); in parti\-cu\-lar, Tarski monsters are $\Y$-groups. 

The structure and main results of the paper are as follows.
In Section~\ref{s:basic} we derive some basic properties of $\Y$-groups; these results are crucial for the further investigations in the subsequent sections. In Section~\ref{s:nilpotent} we consider infinite nilpotent $\Y$-groups; for example, we prove that such groups are abelian, which reduces the investigation of nilpotent $\Y$-groups to finite $p$-groups in $\Y$. Infinite supersoluble groups in $\Y$ are classified in Section~\ref{s:supersoluble}; for example, we prove that  if such a group has no element of order~2, then it must be abelian. In Section~\ref{s:soluble} we discuss some properties of soluble groups in $\Y$. Lastly, in Section~\ref{s:finite}, we consider finite $\Y$-groups, and we derive various characterisations of finite groups in $\Y$.
Motivated by Section~\ref{s:nilpotent}, we focus on finite $p$-groups in $\Y$;  Problem 9 of \cite{Ber09} asks for a classification of such groups. This appears to be hard, as there seem to be many classes of finite $p$-groups that belong to $\Y$.
We show that all finite metacyclic $p$-groups are in $\Y$, and classify the finite $p$-groups in $\Y$ which have maximal class or exponent $p$.

\section{Basic properties of $\Y$-groups}
\label{s:basic}
We collect some basic properties of $\Y$-groups. Since every free group lies in  $\Y$, the class $\Y$ is not quotient closed. On the other hand, $\Y$ obviously is subgroup closed. Similarly, the next lemma is an easy observation.
 
\begin{lemma}
\label{l:center}
If $G$ is an $\Y$-group, then its center $Z(G)$ is contained in every non-abelian subgroup of $G$.
\end{lemma}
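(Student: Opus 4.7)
The statement follows almost immediately from unwinding the definitions, so my plan is a short, direct argument rather than any structural analysis. Let $H$ be an arbitrary non-abelian subgroup of the $\Y$-group $G$; the goal is to show $Z(G)\subseteq H$.

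First I would invoke the defining property of $\Y$: since $H$ is non-abelian, $H$ is self-centralizing in $G$, which by the definition recalled in the introduction means $C_G(H)\subseteq H$. Next I would observe that any element $z\in Z(G)$ commutes with every element of $G$, and in particular with every element of $H$, so $z\in C_G(H)$. Combining these two inclusions gives $Z(G)\subseteq C_G(H)\subseteq H$, which is exactly the claim.

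There is no real obstacle here; the only subtlety worth noting is that one must use that $H$ is non-abelian (not merely non-trivial) to apply the $\Y$-property, which is why the statement is restricted to non-abelian subgroups. I would present the argument in two or three lines without further commentary.
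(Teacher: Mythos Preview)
Your argument is correct and is exactly the intended one: the paper itself omits the proof, calling the lemma ``an easy observation,'' and the natural justification is precisely the chain $Z(G)\leq C_G(H)\leq H$ that you give.
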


As usual, we denote by $\Phi(G)$ the Frattini subgroup of a group $G$.

\begin{lemma}
\label{l:frattini}
If $G\in\Y$ is non-abelian group, then $Z(G)\le\Phi(G)$.
\end{lemma}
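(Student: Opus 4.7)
The plan is to argue by contradiction, exploiting maximal subgroups. Assume first that $G$ admits at least one maximal subgroup (if no maximal subgroup exists, then $\Phi(G)=G$ by the usual convention and the claim is trivial). Suppose for contradiction that there exists $z\in Z(G)$ with $z\notin\Phi(G)$. Then by definition of the Frattini subgroup one can choose a maximal subgroup $M$ of $G$ such that $z\notin M$. Since $M$ is maximal and $M\lneq\langle M,z\rangle$, we must have $G=\langle M,z\rangle$, and because $z$ is central this simplifies to $G=M\langle z\rangle$.

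The next step is to observe that $M$ itself must be non-abelian. Indeed, if $M$ were abelian, then every element of $M$ would commute with every element of $M$, and would also commute with $z$ since $z\in Z(G)$; as $G=M\langle z\rangle$, this would force $G$ to be abelian, contradicting the hypothesis. Hence $M$ is a non-abelian subgroup of the $\Y$-group $G$, so by definition $C_G(M)\le M$. But $z\in Z(G)$ centralizes all of $G$, and in particular $z\in C_G(M)$, which gives $z\in M$, a contradiction.

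The whole argument really rests on the elementary interplay between a central element outside $\Phi(G)$ and a well-chosen maximal subgroup; the self-centralizing condition then delivers the contradiction in one line. The only subtle point is making sure $M$ is non-abelian so that we are allowed to apply the $\Y$-property to $M$, and this is immediate from $G=M\langle z\rangle$ with $z$ central, as above. I don't foresee any real obstacle; the proof should occupy only a few lines.
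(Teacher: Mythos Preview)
Your proof is correct and essentially identical to the paper's: both show that $Z(G)$ is contained in every maximal subgroup $M$ by noting that if $M$ is abelian then $G=MZ(G)$ would be abelian, while if $M$ is non-abelian then $Z(G)\le C_G(M)\le M$ by the $\Y$-property. The only cosmetic difference is that you frame it as a contradiction starting from a single $z\notin\Phi(G)$, whereas the paper verifies $Z(G)\le M$ directly for an arbitrary maximal $M$.
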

\begin{proof}
Let $M$ be a maximal subgroup of $G$.  If $M$ is abelian and $Z(G)\not\le M$, then $MZ(G)=G$, hence $G$ is abelian, a contradiction. On the other hand, if $M$ is non-abelian, then $Z(G)\le C_G(M)< M$. In conclusion, $Z(G)$ lies in every maximal subgroup of $G$, thus $M\leq\Phi(G)$.
\end{proof}

\begin{lemma}
\label{l:normal}
If $G\in \Y$ is infinite, then every normal subgroup of $G$ is abelian or infinite.
\end{lemma}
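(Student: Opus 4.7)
The plan is to argue by contradiction: suppose $N \trianglelefteq G$ is non-abelian and finite, and derive that $G$ itself is forced to be finite.

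First, since $G \in \Y$ and $N$ is non-abelian, $N$ is self-centralizing, so $C_G(N) \le N$. Because $N$ is finite, this already pins down $C_G(N)$ as a finite subgroup (of order at most $|N|$).

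Next, I would use the standard normalizer/centralizer action: conjugation induces a homomorphism $G \to \Aut(N)$ with kernel $C_G(N)$, so $G/C_G(N)$ embeds into $\Aut(N)$. Since $N$ is finite, $\Aut(N)$ is finite, hence $[G:C_G(N)]$ is finite. Combining with $|C_G(N)|<\infty$ yields $|G|<\infty$, contradicting the hypothesis that $G$ is infinite.

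I do not expect any serious obstacle here; the entire argument is a one-line consequence of the defining property of $\Y$ together with the classical $N/C$-theorem. The only thing to be careful about is the degenerate case $N=1$, which is vacuously abelian and therefore not a counterexample, and the mild observation that $C_G(N) \le N$ is what allows one to bound $|C_G(N)|$ in the first place (without the $\Y$-hypothesis this step would fail).
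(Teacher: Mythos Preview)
Your argument is correct and is essentially identical to the paper's own proof: assume $N\unlhd G$ is finite and non-abelian, use $G\in\Y$ to get $C_G(N)\leq N$ finite, then bound $[G:C_G(N)]\leq |\Aut(N)|$ via the conjugation action to force $|G|<\infty$. The paper's proof is a one-line version of exactly this.
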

\begin{proof} Let $H\unlhd G$ be non-abelian. If $H$ is finite, then so is $C_G(H)< H$, and $|G|=[G:C_G(H)||C_G(H)|\leq |\Aut(H)||C_G(H)|<\infty$,  a contradiction.
\end{proof}

The next three results will be of particular importance in Section~\ref{s:supersoluble}. Recall that a group element is aperiodic if its order is infinite.

\begin{lemma}
\label{l:aperiodic}
Let $G\in \Y$. If $N$ is a finite normal subgroup of $G$, then $C_G(N)$ contains all aperiodic elements of $G$.
\end{lemma}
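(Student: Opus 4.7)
The plan is to argue by contradiction. Since $G$ contains an aperiodic element, $G$ is infinite, and since $N$ is finite, Lemma~\ref{l:normal} forces $N$ to be abelian. Fix an aperiodic $g\in G$ and let $\sigma\in\Aut(N)$ be the automorphism induced by conjugation by $g$. Because $N$, and hence $\Aut(N)$, is finite, $\sigma$ has some finite order~$k$. The desired conclusion is $k=1$; assume for contradiction that $k\ge 2$.

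The heart of the argument is to exhibit a non-abelian subgroup $H\le G$ whose centralizer in $G$ provably escapes $H$, contradicting the $\Y$-hypothesis. I propose $H=\langle g^m\rangle N$ with $m=k+1$. Two numerical facts do all the work: (i) $\gcd(m,k)=1$ and $k\ge 2$, so $\sigma^m$ still has order~$k$; in particular $g^m$ does not centralize $N$ and $H$ is non-abelian; and (ii) $m>k\ge 1$, so $m\nmid k$.

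From here the contradiction is immediate. Since $H$ is non-abelian, $C_G(H)\le H$ by the $\Y$-hypothesis. But $g^k$ centralizes $g^m$ (both lie in the abelian subgroup $\langle g\rangle$) and centralizes $N$ (since $\sigma^k=1$), so $g^k\in C_G(H)$. A relation $g^k=g^{mi}n$ with $i\in\Z$ and $n\in N$ would give $g^{k-mi}=n\in\langle g\rangle\cap N$; aperiodicity of $g$ together with finiteness of $N$ makes this intersection trivial, so $k=mi$ and hence $m\mid k$, contradicting~(ii). Thus $g^k\in C_G(H)\setminus H$, as required.

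The only subtle point in the argument is the choice of the exponent $m$: it must simultaneously satisfy $k\nmid m$ (so that $H$ remains non-abelian) and $m\nmid k$ (so that $g^k$ really does escape $\langle g^m\rangle N$). Taking $m=k+1$ clears both hurdles at once, after which everything reduces to the bookkeeping facts $\langle g\rangle\cap N=1$ and $|\sigma|=k$.
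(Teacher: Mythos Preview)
Your argument is correct and follows essentially the same approach as the paper's proof: form the non-abelian subgroup $N\langle g^m\rangle$ for a suitable exponent $m$, use the $\Y$-property to force a centralizing power of $g$ into it, and derive a divisibility contradiction via $\langle g\rangle\cap N=1$. The only cosmetic differences are that the paper chooses its exponent to be a prime $p\nmid n$ (where $x^n\in C_G(N)$) rather than your $m=k+1$, and that your appeal to Lemma~\ref{l:normal} to deduce $N$ is abelian is unnecessary, since nothing in the subsequent argument uses it.
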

\begin{proof} Let $x$ be an aperiodic element of $G$. Suppose, for a contradiction, that $x\notin C_G(N)$. Since $G/C_G(N)$ embeds into the finite group $\Aut(N)$, it is finite. Thus,  there exists a positive integer $n$ such that $x^n\in C_G(N)$. Let $p$ be a prime number which does not divide $n$, so that $x^p\notin C_G(N)$. Then  $N\langle x^p\rangle$ is non-abelian, hence $C_G(N\langle x^p\rangle)< N\langle x^p\rangle$. Since $x^n\in C_G(N\langle x^p\rangle)$, there exist $s\in N$ and $\alpha\in \Z$ such that $x^n=s(x^p)^\alpha$. Now $s=x^{n-p\alpha}\in N\cap\langle x\rangle=\{1\}$ yields $x^{n-p\alpha}=1$, hence $n=p\alpha$, contradicting our choice of $p$.
\end{proof}

\begin{corollary}
\label{c:aperiodic}
Let $G\in \Y$. If $G$ is generated by aperiodic elements, then every finite normal subgroup of $G$ is central.
\end{corollary}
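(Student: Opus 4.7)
The plan is to deduce this immediately from Lemma~\ref{l:aperiodic}. Let $N$ be any finite normal subgroup of $G$. By Lemma~\ref{l:aperiodic}, every aperiodic element of $G$ lies in $C_G(N)$. Since, by hypothesis, $G$ is generated by its aperiodic elements, the subgroup $C_G(N)$ contains a generating set of $G$, and therefore $C_G(N)=G$. This is precisely the statement that $N\le Z(G)$, so $N$ is central.

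There is essentially no obstacle here: the content of the corollary is packaged entirely inside Lemma~\ref{l:aperiodic}, and all that remains is the elementary observation that a subgroup containing every element of a generating set of $G$ must coincide with $G$. The only thing worth being careful about is the trivial case when $G$ has no aperiodic elements at all: the hypothesis is then vacuously false unless $G=\{1\}$ (since the empty set generates only the trivial subgroup), in which case the statement is immediate. So the proof reduces to one line invoking the preceding lemma.
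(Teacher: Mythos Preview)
Your proof is correct and matches the paper's intended approach: the corollary is stated without proof precisely because it is an immediate consequence of Lemma~\ref{l:aperiodic}, exactly as you have written.
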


\begin{lemma}
\label{l:inverting}
Let $G\in \Y$ and let $a\in G$ be non-trivial of infinite or odd order. If $a^x=a^{-1}$ for some $x\in G$, then $x$ is periodic and its order is a power of $2$. Moreover, $C_G(\langle a,x\rangle)=\langle x^2\rangle$.
\end{lemma}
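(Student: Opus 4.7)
The plan is to first identify $H=\langle a,x\rangle$ as a semidirect product $\langle a\rangle\rtimes\langle x\rangle$, compute $C_H(H)$ by a normal-form argument, invoke the $\Y$-property to deduce $C_G(H)=\langle x^2\rangle$, and finally re-apply this centralizer formula to a suitable power of $x$ in order to force $x$ to have $2$-power order.

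For the setup, since $a$ is non-trivial of infinite or odd order we have $a\ne a^{-1}$, so $x$ does not centralize $a$ and $H$ is non-abelian; the $\Y$-hypothesis then gives $C_G(H)\subsetneq H$. A direct computation yields $a^{x^2}=(a^{-1})^x=a$, hence $x^2\in C_G(a)\cap C_G(x)=C_G(H)$. To check that $H=\langle a\rangle\rtimes\langle x\rangle$, note that any element of $\langle a\rangle\cap\langle x\rangle$ is some $a^i$ and, being a power of $x$, commutes with $x$, so $a^i=a^{-i}$, giving $a^{2i}=1$ and hence $a^i=1$ by the order hypothesis on $a$. Writing a general element of $H$ in the normal form $a^ix^j$ and using $x^{-1}ax=a^{-1}$, one verifies that $a^ix^j$ centralizes $a$ precisely when $j$ is even, and centralizes $x$ as well precisely when $a^i=1$. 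Hence $C_H(H)=\langle x^2\rangle$, and combining with $C_G(H)\subseteq H$ (from $\Y$) and $\langle x^2\rangle\subseteq C_G(H)$ yields the second assertion $C_G(\langle a,x\rangle)=\langle x^2\rangle$.

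For the first assertion I would re-apply the centralizer formula to the pair $(a,x^s)$ for an appropriate odd integer $s$. If $x$ has finite order $n=2^ks$ with $s>1$ odd, then $k=0$ is immediately ruled out (else $1=x^s$ would invert $a$, forcing $a^2=1$), so $k\ge 1$ and $x^s$ is a non-trivial element inverting $a$; the formula applied to $(a,x^s)$ gives $C_G(\langle a,x^s\rangle)=\langle x^{2s}\rangle$. Since $x^2$ centralizes both $a$ and $x^s$, it lies in $\langle x^{2s}\rangle$; that is, $n$ divides $2(sj-1)$ for some integer $j$, and this forces $s\mid 1$, a contradiction. The infinite-order case is analogous with $s=3$: one obtains $x^2\in\langle x^6\rangle$, which is impossible since $x$ has infinite order. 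Therefore $x$ is periodic of $2$-power order.

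The delicate step is the semidirect-product reduction in the second paragraph, where the hypothesis on the order of $a$ is used precisely to pass from $a^{2i}=1$ to $a^i=1$; this is the only point at which the hypothesis enters and explains why the conclusion would fail for $a$ of even order greater than $2$. The re-application of the centralizer formula in the third paragraph is not circular: once established for any pair $(a,y)$ satisfying its hypotheses, it applies equally well to the pair $(a,x^s)$.
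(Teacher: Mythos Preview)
Your proof is correct and follows essentially the same approach as the paper's: both use the product decomposition $\langle a,y\rangle=\langle a\rangle\langle y\rangle$ (for $y=x$ or $y=x^{\text{odd}}$), write centralizer elements in the form $a^\alpha y^\beta$, and exploit the $\Y$-property together with the hypothesis on the order of $a$ to kill the $a^\alpha$ part. The only organizational difference is that you establish the centralizer formula $C_G(\langle a,x\rangle)=\langle x^2\rangle$ first and then re-apply it to $(a,x^s)$ to force $2$-power order, whereas the paper treats the order of $x$ first (via $\langle a,x^3\rangle$ and $\langle a,x^p\rangle$) and only afterwards computes the centralizer; your packaging is slightly cleaner since it avoids repeating the normal-form calculation.
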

\begin{proof} 
If $x$ is aperiodic, then $\langle a, x^3\rangle$ is non-abelian, hence $C_G(\langle a, x^3\rangle)< \langle a, x^3\rangle = \langle a\rangle\langle x^3\rangle$. Since $x^2\in C_G(\langle a,x^3\rangle)$, we can write $x^2= a^\alpha (x^3)^\beta$ for some $\alpha,\beta\in \Z$. Since $x^2$ commutes with $x$, we must have  $a^\alpha=a^{-\alpha}$, and  $a^\alpha=1$ follows. Then $x^2=x^{3\beta}$, which yields the contradiction $2=3\beta$. This proves that $x$ must be periodic, say with order $n$. Suppose $n$ is divisible by an odd prime $p$. Since $\langle a, x^p\rangle$ is not abelian, $x^2\in C_G(\langle a, x^p\rangle)< \langle a, x^p\rangle = \langle a\rangle\langle x^p\rangle$. As before, we can write $x^2=x^{p\beta}$ for some $\beta\in\Z$, that is, $p\beta=2+cn$ for some $c\in \Z$. But this is not possible since $p$ is odd and dividing $n$.

Let now $g\in C_G(\langle a,x\rangle)$. Since $\langle a,x\rangle$ is non-abelian,  $C_G(\langle a,x\rangle)<\langle a,x\rangle=\langle a\rangle\langle x\rangle$. Then we can write $g=a^\alpha x^\beta$ for suitable $\alpha,\beta\in \Z$. Thus $g=g^x=a^{-\alpha} x^\beta$, hence $a^{2\alpha}=1$ and so $a^\alpha=1$, that is, $g=x^\beta$. Since $a=a^g=a^{(x^\beta)}$, it follows  that $\beta$ is even, and therefore $g\in\langle x^2\rangle$. This completes the proof.
\end{proof}


The following results are easy, but useful, observations.

\begin{lemma}
\label{l:2-gen}
A group $G$ is in the class $\Y$ if and only if $C_G(\langle x,y\rangle)<\langle x,y\rangle$ for every pair of non-commuting elements $x,y\in G$.
\end{lemma}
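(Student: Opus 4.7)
The statement amounts to a reduction from arbitrary non-abelian subgroups to those generated by two non-commuting elements, and the proof is essentially a one-line observation in each direction. My plan is to treat the two implications separately.

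For the forward direction, suppose $G\in\Y$ and let $x,y\in G$ not commute. Then $H:=\langle x,y\rangle$ is non-abelian, so by definition of $\Y$ its centralizer in $G$ is contained in $H$. To upgrade containment to strict containment, I would note that $C_G(H)\cap H=Z(H)$, and $Z(H)$ is a proper subgroup of $H$ because $H$ is non-abelian; combined with $C_G(H)\le H$ this gives $C_G(H)=Z(H)<H$.

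For the converse, assume $C_G(\langle x,y\rangle)<\langle x,y\rangle$ whenever $x,y\in G$ do not commute, and let $H\le G$ be non-abelian. Pick any non-commuting pair $x,y\in H$, which exists precisely because $H$ is non-abelian, and set $K=\langle x,y\rangle\le H$. For any $g\in C_G(H)$ one has $g\in C_G(K)$, and by hypothesis $C_G(K)<K\le H$, hence $g\in H$. This shows $C_G(H)\le H$, i.e.\ $H$ is self-centralizing, and so $G\in\Y$.

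There is essentially no obstacle: the only subtle point is that ``self-centralizing'' is defined by $C_G(H)\le H$, whereas the lemma asserts strict containment on two-generator non-abelian subgroups. This is why in the forward direction one must invoke that the center of a non-abelian group is proper, and it is also the reason the hypothesis in the converse is stated with strict inequality (so that when we intersect with $H$ we still get a proper inclusion into $H$, and hence $g$ lies in $H$). Apart from this bookkeeping, the proof is a direct application of Lemma~\ref{l:center}-style reasoning together with the fact that every non-abelian subgroup contains a non-commuting pair.
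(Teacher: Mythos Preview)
Your proof is correct, and since the paper states this lemma without proof (treating it as an ``easy, but useful, observation''), there is nothing to compare against; your argument is precisely the natural one. One minor expository point: in the converse direction you do not actually need the \emph{strict} inequality $C_G(K)<K$ anywhere---the containment $C_G(K)\le K\le H$ already gives $g\in H$---so the strictness in the lemma's hypothesis is a free consequence rather than an essential ingredient.
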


\begin{corollary}
\label{c:minimal}
A finite group $G$ is in the class $\Y$ if and only if $C_G(K)<K$ for every minimal non-abelian subgroup $K$ of $G$.
\end{corollary}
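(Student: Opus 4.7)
The plan is to handle the two directions separately, with essentially all the content sitting on the ``if'' side. For the ``only if'' direction, observe that a minimal non-abelian subgroup $K$ of $G$ is in particular non-abelian, so $G \in \Y$ forces $C_G(K) \le K$; as $K$ is non-abelian this gives $C_G(K) = Z(K) < K$. Finiteness is not used here, and in fact this implication holds for arbitrary groups.

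For the converse I would assume the stated condition on minimal non-abelian subgroups and let $H \le G$ be an arbitrary non-abelian subgroup. Using that $G$ is finite, pick $K \le H$ of least order among the non-abelian subgroups of $H$. By the minimal choice, every proper subgroup of $K$ is abelian, hence $K$ is a minimal non-abelian group, and in particular a minimal non-abelian subgroup of $G$. Applying the hypothesis to $K$ and the obvious monotonicity $C_G(H) \le C_G(K)$ gives $C_G(H) \le C_G(K) < K \le H$, which is exactly the self-centralizing condition for $H$. So $G \in \Y$. A cosmetic variant of the same argument uses Lemma \ref{l:2-gen}: one reduces to showing $C_G(\langle x,y\rangle) < \langle x,y\rangle$ for non-commuting $x,y \in G$ and then extracts a minimal non-abelian subgroup inside the finite non-abelian group $\langle x,y\rangle$.

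The only point that requires a moment of thought is the existence of a minimal non-abelian subgroup inside any finite non-abelian group, which is immediate by picking a non-abelian subgroup of smallest order. Consequently I do not expect any real obstacle: the corollary is a formal consequence of the definition of $\Y$ together with the descending chain condition on subgroups of a finite group, and the role of finiteness is exactly to guarantee that the descent to a minimal non-abelian subgroup terminates.
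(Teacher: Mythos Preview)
Your argument is correct and is exactly what the paper has in mind: the corollary is stated without proof as an ``easy, but useful, observation'' following Lemma~\ref{l:2-gen}, and your reasoning (descend inside a finite non-abelian subgroup to a minimal non-abelian one, then use monotonicity of centralizers) is the intended one-line justification.
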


\section{Infinite nilpotent $\Y$-groups}
\label{s:nilpotent}
Every abelian group lies in $\Y$, thus,  compared with $\X$-groups, the structure of $\Y$-groups which are soluble or nilpotent is less restricted. Our first result reduces the study of nilpotent $\Y$-groups to  finite nilpotent $\Y$-groups.

\begin{theorem} 
\label{t:nilpotent}
Every nilpotent $\Y$-group is abelian or finite.
\end{theorem}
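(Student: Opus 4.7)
The plan is a proof by contradiction: assume $G$ is an infinite, non-abelian nilpotent $\Y$-group and derive a contradiction. The two natural cases are according to whether $G$ is periodic.

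In the periodic case, $G$ decomposes as a direct product of its Sylow subgroups, $G=\prod_p G_p$. Since $G$ is non-abelian, some $G_p$ is non-abelian, and the $\Y$-condition applied to $G_p$ forces $C_G(G_p)\le G_p$. Because every $G_q$ with $q\ne p$ centralizes $G_p$, we obtain $G_q=1$ for such $q$, so $G=G_p$ is a $p$-group. Pick non-commuting $a,b\in G$ and set $H=\langle a,b\rangle$; as a finitely generated periodic nilpotent group, $H$ is finite. Every subgroup of a nilpotent group is subnormal, so choose a subnormal chain $H=H_0\lhd H_1\lhd\cdots\lhd H_k=G$ and argue inductively that each $H_i$ is finite: since $H_{i-1}\supseteq H$ it is non-abelian, so the $\Y$-condition gives $C_G(H_{i-1})\le H_{i-1}$ finite; because $H_{i-1}\lhd H_i$, the quotient $H_i/C_{H_i}(H_{i-1})$ embeds in the finite group $\Aut(H_{i-1})$, and $C_{H_i}(H_{i-1})\le C_G(H_{i-1})$ is finite, so $H_i$ is finite. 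Iterating to $G=H_k$ yields $G$ finite, a contradiction.

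In the non-periodic case, pick non-commuting $a,b\in G$ and set $H=\langle a,b\rangle$; if $H$ is finite the subnormal-chain argument applies to give $G$ finite, contradicting the hypothesis. So $H$ is an infinite finitely generated nilpotent subgroup and lies in $\Y$; replacing $G$ by $H$, we may assume $G$ is two-generated, infinite, non-abelian, nilpotent, and in $\Y$. The engine is Lemma~\ref{l:center}, which forces $Z(G)\le K$ for every non-abelian subgroup $K\le G$. For each $n\ge 1$ set $K_n=\langle a^n,b^n\rangle$: root uniqueness in the torsion-free quotient $G/T(G)$ ensures $[a^n,b^n]\ne 1\pmod{T(G)}$ for all $n\ne 0$, so $K_n$ is non-abelian. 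A Heisenberg-style index calculation, modelled on the integral Heisenberg group where $\langle a^n,b^n\rangle\cap Z(H_3(\Z))=\langle [a,b]^{n^2}\rangle$ has index $n^2$ in $Z(H_3(\Z))$, should then show that for $n$ large enough $Z(G)\not\le K_n$, contradicting Lemma~\ref{l:center}.

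The main obstacle is making this last step go through when $G$ has non-trivial torsion, since a torsion central element can sit inside every $K_n$ despite the index growing. Here Lemma~\ref{l:frattini} ($Z(G)\le\Phi(G)$) pins central elements inside every maximal subgroup, and Lemma~\ref{l:aperiodic} controls how the torsion subgroup interacts with aperiodic elements. A careful case split---according to whether $Z(G)$ contains an element of infinite order, and according to the structure of the torsion subgroup $T(G)$---should let one either execute the Heisenberg-style argument directly (exploiting an infinite-order central element escaping some $K_n$) or reduce to the already-handled periodic case by constructing a suitable finite non-abelian subgroup to which the subnormal-chain argument can be applied.
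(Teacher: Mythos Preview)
Your subnormal-chain argument is correct and clean: in a nilpotent group every subgroup is subnormal of defect at most the class, so once you have a finite non-abelian subgroup $H$, iterating ``$C_G(H_{i-1})\le H_{i-1}$ finite and $H_i/C_{H_i}(H_{i-1})\hookrightarrow\Aut(H_{i-1})$'' forces $G$ finite. This disposes of the periodic case and of the non-periodic case whenever some non-abelian $\langle a,b\rangle$ happens to be finite; in fact it is precisely the iterated form of the paper's Lemma~\ref{l:normal}.

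The genuine gap is exactly where you say it is, and it is not a detail. After reducing to $G=\langle a,b\rangle$ infinite, your argument needs $K_n=\langle a^n,b^n\rangle$ non-abelian; you justify this via ``root uniqueness in $G/T(G)$'', but that only works if $[a,b]\notin T(G)$. If $[a,b]$ is torsion then $G'\le T(G)$ is finite, $G/T(G)$ is abelian, and there is no reason for $K_n$ to be non-abelian for large $n$. Even when the $K_n$ \emph{are} non-abelian, your Heisenberg index count only excludes $Z(G)$ from $K_n$ when $Z(G)$ has an infinite-order element; you acknowledge that a torsion central element could sit inside every $K_n$, and the suggested appeals to Lemmas~\ref{l:frattini} and~\ref{l:aperiodic} are not turned into an argument. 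So as written the proof does not close.

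The paper avoids this difficulty by organizing the induction differently: on nilpotency class rather than periodic/non-periodic. The inductive step ($c>2$) is cheap, since some $\langle a\rangle G'$ is non-abelian of smaller class, hence finite, hence contradicts Lemma~\ref{l:normal}. All the work is at $c=2$, and the key observation there is that $G'\le Z(G)<H$ for every non-abelian $H$, so $G$ is metahamiltonian and $G'$ is a finite $p$-group. One then fixes non-commuting $a,b$, shows $|G:Z(H)|\le|G'|^2$ via conjugacy-class counting, and pins $Z(H)/H'$ inside the $p$-primary part of the finitely generated abelian group $H/H'$ by noting $Z(H)\le\langle a^n,b^n\rangle$ for every $n$ coprime to $p$. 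This last step is morally your $K_n$ idea, but restricting to $n$ coprime to $p$ is exactly what makes $[a^n,b^n]=[a,b]^{n^2}\ne 1$ automatic and sidesteps the torsion obstacle you identified.
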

\begin{proof}
First let $G$ be an $\Y$-group of class $c=2$; we show that $G$ is finite. Note that  $G$ is metahamiltonian, that is, all its non-abelian subgroups are normal:  if $H\leq G$ is non-abelian, then $H\unlhd G$ follows from $G'\leq Z(G)< H$. This implies that $G'$ is a finite $p$-group  
 (see, for example, \cite[Theorem 2.1]{Def13}).
Let $a,b\in G$ with $[a,b]\ne 1$ and define $H=\langle a,b\rangle$. Since $G\in\Y$, we have $C_G(H)=Z(H)$.  Note that if $x\in G$, then $|G:C_G(x)|=|x^G|=|\{g^{-1}xg\mid g\in G\}|=|\{[x,g]\mid g\in G\}|\leq |G'|$, hence \[|G:C_G(H)|=|G:(C_G(A)\cap C_G(b))|\leq |G:C_G(a)||G:C_G(b)|\leq |G'|^2,\]
so $|G:C_G(H)|$ is finite. Recall that $C_G(H)=Z(H)$ and that $H'\leq G'$ is  finite; thus, to prove that $G$ is finite, it suffices to prove that $|Z(H):H'|$ is finite. Recall that $G'$ is a finite abelian $p$-group, thus, for every positive integer $n$ coprime to $p$, the commutator formula in \cite[Corollary 1.1.7(ii)]{Lee02} yields  $[a^n,b^n]=[a,b]^n\ne 1$, in particular, $a^n,b^n\ne 1$. Since $G\in\Y$, this implies that  $Z(H)\leq \langle a^n,b^n\rangle$ for all such $n$. It follows that $Z(H)/H'$ lies in \[K=\bigcap\nolimits_{\gcd(n,p)=1} (H/H')^n=\bigcap\nolimits_{\gcd(n,p)=1} \langle a^n,b^n\rangle H'/H',\]where $(H/H')^n$ is the subgroup of $H/H'$ generated by all $n$-th powers. Note that $K$ is finite, as it is equal to the $p$-part of $H/H'$,
hence so is  $Z(H)/H'\leq K$. As discussed above, this implies that $G$ is finite. 
 
Now consider an $\Y$-group $G$ of nilpotency class  $c>2$ and use induction on $c$. Suppose, for a contradiction, that $G$ is infinite. For all  $a\in G\setminus G'$, the nilpotency class of $\langle a\rangle G'$ is less than $c$. If $\langle a\rangle G'$ is abelian for all $a\in G\setminus G'$, then $G'\leq Z(G)$ and $c\leq2$, which is a contradiction to our assumption that $c>2$. Thus, $H=\langle a\rangle G'$ is non-abelian for some  $a\in G\setminus G'$; by the induction hypothesis, $H$ is finite. This contradicts Lemma~\ref{l:normal}, thus $G$ is finite.
\end{proof} 

Let $G$ be a Chernikov $p$-group, that is,  $G$ is an extension of a direct product of a finite number $k$ of Pr\"ufer $p$-groups by a finite group; the number  $\delta(G)=k$ is an invariant of $G$.

\begin{proposition}
\label{p:locallynilpotent}
If $G$ is a non-abelian infinite locally nilpotent $\Y$-group, then $G$ is a Chernikov $p$-group with $\delta(G)\geq p-1$.
\end{proposition}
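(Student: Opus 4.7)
The plan is to reduce $G$ to a Chernikov $p$-group via successive structural restrictions and then bound $\delta(G)$ by analysing the action of $G$ on its divisible radical.

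First I would show that $G$ is a locally finite $p$-group. Every finitely generated subgroup of $G$ is nilpotent and, by Theorem~\ref{t:nilpotent}, abelian or finite. Since $G$ is non-abelian, pick $x,y\in G$ with $[x,y]\ne 1$; then $H=\langle x,y\rangle$ is a finite non-abelian subgroup, and the $\Y$-property gives $Z(G)\le C_G(H)<H$, so $Z(G)$ is finite. If $a\in G$ had infinite order, then for any $b\in G$ the finitely generated nilpotent $\Y$-group $\langle a,b\rangle$ would be infinite, hence abelian by Theorem~\ref{t:nilpotent}; so $a$ would centralise all of $G$ and lie in the finite group $Z(G)$, a contradiction. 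Thus $G$ is torsion, and as a locally nilpotent torsion group it decomposes as the restricted direct product of its Sylow subgroups $G_r$. Since $G$ is non-abelian, some factor $G_p$ is non-abelian; if $G_q\ne 1$ for any other prime $q$, then $G_q\le C_G(G_p)<G_p$ by the $\Y$-property, contradicting $G_p\cap G_q=1$. Hence $G=G_p$ for a single prime $p$, and every finitely generated subgroup is now finite, so $G$ is locally finite.

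Next, I would invoke a standard structural result from the theory of locally finite $p$-groups: such a group containing a finite subgroup with finite centralizer is Chernikov (in the spirit of work by Kegel--Wehrfritz, Shunkov, Hartley and others). The non-abelian finite subgroup $H$ found above satisfies $|C_G(H)|<|H|<\infty$, so $G$ is a Chernikov $p$-group; in particular, its maximal divisible subgroup $D$ is isomorphic to $(\Z_{p^\infty})^{\delta(G)}$ and $G/D$ is a finite $p$-group. This Chernikov step is the main obstacle, and it is the only place where I plan to invoke an external theorem as a black box.

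Finally, I would bound $\delta(G)$ via the action of $G$ on $D$. If $D$ were contained in $Z(G)$, then $G$ would be a central extension of the abelian group $D$ by the finite nilpotent $G/D$, and therefore itself nilpotent; by Theorem~\ref{t:nilpotent}, $G$ would then be abelian or finite, which is excluded. Hence $D\not\le Z(G)$, and the conjugation action of $G$ on $D$ yields a non-trivial embedding $G/C_G(D)\hookrightarrow\Aut(D)\cong\GL_{\delta(G)}(\Z_p)$ whose image is a non-trivial finite $p$-subgroup. In particular, $\GL_{\delta(G)}(\Z_p)$ has an element of order $p$; but the $p$-th cyclotomic polynomial $\Phi_p$ is irreducible of degree $p-1$ over $\Q_p$, so such an element forces $\delta(G)\ge p-1$, giving the claimed bound.
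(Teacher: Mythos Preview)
Your argument is correct and follows essentially the same three-step skeleton as the paper: first reduce to a locally finite $p$-group, then invoke an external result to obtain the Chernikov property, and finally bound $\delta(G)$. The details differ only in how each step is justified. For periodicity, the paper shows that torsion-free elements are central and then cites \cite{Del07}; you instead first prove $Z(G)$ is finite and derive periodicity directly, which is self-contained and slightly cleaner. For the Chernikov step, the paper cites \cite[Theorem~5]{End10} (finite centralizer implies finite Pr\"ufer rank) and then Blackburn \cite{Bla62}; your black-box appeal to the Kegel--Wehrfritz/Shunkov circle of results reaches the same conclusion but with a less precise reference. For the bound $\delta(G)\ge p-1$, the paper simply cites ``a result of Chernikov'' from \cite{Bla62}, whereas you supply an explicit argument via the embedding $G/C_G(D)\hookrightarrow\GL_{\delta(G)}(\Z_p)$ and the irreducibility of $\Phi_p$ over $\Q_p$; this is a genuine improvement in transparency over the paper's bare citation.
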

\begin{proof} Every finitely generated subgroup of $G$ is nilpotent, so by Theorem~\ref{t:nilpotent} it is either abelian or finite. This implies that all torsion-free elements of $G$ are central. It follows from \cite[Proposition~1]{Del07}) that $G$ is periodic, hence  $G$ is direct product of groups of prime-power order, see for instance \cite[Proposition~12.1.1]{Rob96}. In fact, only one prime can occur since $G$ is an $\Y$-group, that is, $G$ is a locally finite $p$-group for some prime $p$. Since $G$ is non-abelian, there exist non-commuting $a,b\in G$. The subgroup $H=\langle a,b\rangle$ is finite and non-abelian, thus $C_G(H)< H$ is finite and $G$ has finite Pr\"ufer rank by \cite[Theorem~5]{End10}.  By a result of Blackburn \cite{Bla62}, the group  $G$ is a Chernikov $p$-group. Finally, $\delta(G)\geq p-1$ by a result of Chernikov, see for instance \cite{Bla62}.
\end{proof}

As a consequence of Proposition~\ref{p:locallynilpotent} and \cite[12.2.5]{Rob96} we get the following corollary; recall that a group is hypercentral if  its upper central series terminates at the whole group.

\begin{corollary}
\label{c:hypercentral}
Every locally nilpotent $\Y$-group is hypercentral.
\end{corollary}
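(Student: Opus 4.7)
The plan is to reduce to the case covered by Proposition~\ref{p:locallynilpotent} and then invoke the cited result from \cite{Rob96}. Let $G$ be a locally nilpotent $\Y$-group. Split into three exhaustive cases according to the size of $G$ and whether it is abelian.

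First, if $G$ is abelian, then $Z(G)=G$, so the upper central series terminates at $G$ after one step and $G$ is hypercentral. Second, if $G$ is finite, then being locally nilpotent forces $G$ itself to be nilpotent, and for a nilpotent group the upper central series reaches $G$ in finitely many steps, again yielding hypercentrality. These two cases are essentially by definition and require no further argument.

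The only substantive case is therefore when $G$ is infinite and non-abelian. Here Proposition~\ref{p:locallynilpotent} applies directly and tells us that $G$ is a Chernikov $p$-group. At this point the result \cite[12.2.5]{Rob96} finishes the proof: a locally nilpotent Chernikov group (in particular a Chernikov $p$-group) is hypercentral. Assembling the three cases gives the corollary.

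The argument has no real obstacle to speak of: the heavy lifting was already done in Proposition~\ref{p:locallynilpotent} (where locally finiteness, reduction to a single prime, finite Pr\"ufer rank, and Blackburn's theorem were combined to pin $G$ down as Chernikov), and hypercentrality of Chernikov $p$-groups is a standard fact. The only thing to be careful about is to make explicit that the abelian and finite sub-cases are not covered by Proposition~\ref{p:locallynilpotent} but are trivial on their own, so that the corollary indeed applies to \emph{every} locally nilpotent $\Y$-group.
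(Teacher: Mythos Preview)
Your proof is correct and follows essentially the same route as the paper: the paper simply states that the corollary is a consequence of Proposition~\ref{p:locallynilpotent} together with \cite[12.2.5]{Rob96}, and you have just made explicit the trivial abelian and finite sub-cases that the paper leaves implicit.
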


Theorem~\ref{t:nilpotent} and Proposition~\ref{p:locallynilpotent} reduce the study  of nilpotent $\Y$-groups to finite $p$-groups; we provide some results on these groups in Section~\ref{s:finite}.

\section{Infinite supersoluble $\Y$-groups}
\label{s:supersoluble}
A group is supersoluble if it admits a normal series with cyclic sections. In this section we describe the structure of infinite supersoluble $\Y$-groups completely. If such a group has no involutions, then it must be abelian.

\begin{theorem} 
\label{t:notranspositions}
An infinite supersoluble $\Y$-group with no involutions is abelian.
\end{theorem}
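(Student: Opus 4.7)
The plan is to argue by contradiction: assume $G$ is an infinite, non-abelian, supersoluble $\Y$-group with no involutions. The goal is to show $G$ must be nilpotent; Theorem~\ref{t:nilpotent} together with the infiniteness of $G$ then forces $G$ to be abelian, contradicting the assumption.

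First I would extract a central infinite cyclic subgroup of $G$. Using supersolubility, refine a cyclic normal series of $G$ so that each factor is either infinite cyclic or of prime order; because $G$ is infinite and has no involutions, the initial string of finite factors yields a finite normal subgroup $N\unlhd G$ of odd order, and the next factor is infinite cyclic, generated by $aN$ for some aperiodic $a\in G$. Lemma~\ref{l:aperiodic} shows that $a$ centralizes $N$, so $N\langle a\rangle = N\times\langle a\rangle$, and $G$ acts on the quotient $N\langle a\rangle/N\cong\Z$ via $\Aut(\Z)=\{\pm 1\}$. If some $g\in G$ inverts $aN$, write $a^g=sa^{-1}$ with $s\in N$; since $|N|$ is odd, one can find $t\in N$ with $t^2=s^{-1}$, and provided $g$ centralizes $t$ (which holds when $g$ is aperiodic, by Lemma~\ref{l:aperiodic}), a direct calculation gives $(ta)^g=(ta)^{-1}$, so $g$ inverts the aperiodic element $ta$. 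Lemma~\ref{l:inverting} then forces $g$ to have $2$-power order, whence $g=1$ (no involutions) --- a contradiction. A parity argument handles any remaining torsion $g$: since $g^2\in C_G(aN)$, if $|g|$ is odd then $\langle g^2\rangle=\langle g\rangle\subseteq C_G(aN)$, contradicting $g\notin C_G(aN)$, while if $|g|$ is even then $g^{|g|/2}$ is an involution, again a contradiction. Therefore $a^g=s_g a$ for all $g\in G$, with $s_g\in Z(N)$ because $a^g\in Z(N\langle a\rangle)=Z(N)\times\langle a\rangle$.

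Next I would show $s_g=1$ for all $g\in G$, which gives $\langle a\rangle\le Z(G)$. For aperiodic $g$, Lemma~\ref{l:aperiodic} shows that $g$ centralizes $Z(N)$, so $\langle s_g\rangle$ lies in the centre of $\langle a,g\rangle$, making $\langle a,g\rangle$ nilpotent of class at most $2$; as it is infinite (it contains $a$) and in $\Y$, Theorem~\ref{t:nilpotent} forces it to be abelian, and hence $s_g=1$. The torsion case is more delicate: using that torsion elements of $G$ have odd order and that $Z(N)$ has odd order, one exploits the cocycle relation $s_{gh}=s_g^h s_h$ for $g\mapsto s_g$ to again place $\langle a,g\rangle$ inside an infinite nilpotent class-$\le 2$ $\Y$-group, which is abelian by Theorem~\ref{t:nilpotent}. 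Thus $a\in Z(G)$. Repeating this local two-generator analysis at every factor of the refined cyclic normal series of $G$ shows that $G$ centralizes each factor --- the infinite cyclic ones by the inversion argument above, the prime-order ones by the nilpotent class-$2$ subgroup argument --- which via the usual telescoping of the upper central series means $G$ is nilpotent. Theorem~\ref{t:nilpotent} then closes the contradiction. The principal obstacle is the torsion case together with the fact that $\Y$ is not closed under quotients (in particular, $G/\langle a\rangle$ may acquire involutions), so the proof cannot proceed by induction on the Hirsch length through a quotient; it must rely entirely on the local two-generator analysis inside $G$ itself, combining Lemmas~\ref{l:aperiodic} and~\ref{l:inverting} with Theorem~\ref{t:nilpotent} as a source of abelianness.
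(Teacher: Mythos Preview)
Your strategy matches the paper's: show that $G$ centralises every factor of a cyclic normal series, deduce nilpotency, and invoke Theorem~\ref{t:nilpotent}. The inversion step via Lemma~\ref{l:inverting} and the class-$\le 2$ trick are also the same. The gap is the torsion case. Your claim that the cocycle relation lets one ``place $\langle a,g\rangle$ inside an infinite nilpotent class-$\le 2$ $\Y$-group'' is unsubstantiated: without knowing that $g$ centralises $s_g=[a,g]$, the subgroup $\langle a,g\rangle$ need not have class $\le 2$ (indeed it need not be nilpotent), and the identity $\prod_{i=0}^{m-1}s_g^{g^i}=1$ does not by itself force $s_g\in Z(\langle a,g\rangle)$. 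The same issue obstructs your induction: you show $a\in Z(G)$ but never $N\le Z(G)$, so for the next factor the correction term lands in $N\langle a\rangle$ rather than in $Z(G)$, and the class-$\le 2$ argument stalls for torsion $g$ again.

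The paper eliminates this difficulty at the outset with one observation you omit. By \cite[5.4.9]{Rob96} the torsion elements of a supersoluble group form a finite normal subgroup $T$; since $G$ is infinite it is generated by aperiodic elements, and Corollary~\ref{c:aperiodic} (an immediate consequence of Lemma~\ref{l:aperiodic}) gives $T\le Z(G)$. This makes the torsion case trivial (every torsion $g$ is already central) and supplies the correct induction base $T\le Z(G)$. Above $T$, Zappa's theorem yields a series with infinite cyclic factors and a finite $2$-group on top, the latter trivial since $G$ has no involutions. The paper then shows inductively that each $G_i\le Z(G)$: rule out inversion by noting $[a,x]^x=[a,x]^{-1}$ with $[a,x]$ aperiodic and applying Lemma~\ref{l:inverting}, conclude $G_{i+1}\le Z_2(G)$, and observe that each $\langle x\rangle G_{i+1}$ is infinite of class $\le 2$, hence abelian by Theorem~\ref{t:nilpotent}. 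This is exactly your aperiodic-case argument, but with the torsion obstacle removed in advance.
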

\begin{proof} Let $G$ be an infinite supersoluble $\Y$-group without elements of even order. Then the set  $T$ of all periodic elements in $G$ is a finite subgroup of $G$ by \cite[5.4.9]{Rob96}. As $T<G$, it easily follows that $G$ is generated by its aperiodic elements, hence $T\leq Z(G)$ by Corollary~\ref{c:aperiodic}. By a result of Zappa (see for instance \cite[5.4.8]{Rob96}), the quotient group $G/T$ has a finite series  
$$G_0/T=T/T\leq G_1/T\leq\dots\leq G_s/T\leq G/T,$$
where each $G_i$ is normal in $G$, each $G_{j+1}/G_j$ is infinite cyclic, and $G/G_s$ is a finite $2$-group. We claim that each $G_i\leq Z(G)$. Since $G_0=T$, this is true for $i=0$. By induction on $i$, let us assume that $G_i\leq Z(G)$ with $0\leq i\leq s-1$; then $G_{i+1}=\langle a \rangle G_i$ where $\langle a \rangle$ is infinite. Suppose, for a contradiction, that $G_{i+1}/G_i\not\leq Z(G/G_i)$. Then $(G/G_i)/C_{G/G_i}(G_{i+1}/G_i)$ has order~2, hence there exists $x\in G$ such that $a^xG_i=a^{-1}G_i$. It follows that $a^x=a^{-1}y$ for a suitable $y\in G_i$, hence $a^{(x^2)}=(a^{-1}y)^{-1}y=a$ and $a^{(x^2)}=a$. Now it follows from  $1=[a,x^2]=[a,x][a,x]^x$ that $[a,x]^x=[a,x]^{-1}$, and therefore $[a,x]=1$ by Lemma~\ref{l:inverting}, a contradiction. This shows that $G_{i+1}/G_i\leq Z(G/G_i)$, hence $G_{i+1}\leq Z_2(G)$. It follows readily that $\langle x\rangle G_{i+1}$ is nilpotent for all $x\in G$. Since $G_{i+1}$ is infinite, $\langle x\rangle G_{i+1}$ is abelian by Theorem~\ref{t:nilpotent}. This means that $G_{i+1}\leq C_G(x)$ for all $x\in G$, so $G_{i+1}\leq Z(G)$, and the claim is proved. In particular, $G_s\leq Z(G)$. Since $G/G_s$ is a finite 2-group (hence nilpotent), it follows that $G$ is nilpotent. Hence $G$ is abelian by Theorem~\ref{t:nilpotent}.
\end{proof}

Now we consider  infinite supersoluble $\Y$-groups with involutions.
\begin{theorem} 
\label{t:supersoluble}
Let $G$ be a non-abelian infinite supersoluble group. Then $G$ lies in $\Y$ if and only if the following holds:
$G=A\langle x\rangle$ with $x$ of order $2^n$ and  $A=\langle a_1\rangle\times\dots\times\langle a_t\rangle\times\langle d\rangle$ abelian with each $a_i$ of infinite or odd order, $d$ of order $2^h$ (with  $d^{(2^{h-1})}=x^{(2^{n-1})}$ if $h>0$), and  $a^x=a^{-1}$ for all $a\in A$.
\end{theorem}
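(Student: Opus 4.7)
The theorem is an equivalence, so I would prove the two directions separately; the reverse direction is the substantive one. For the forward direction, given $G = A\langle x\rangle$ as described, I first observe that $A$ is normal in $G$ (as $x$ inverts $A$) and that $A_0 := A\langle x^2\rangle$ is an abelian subgroup of index $2$ in $G$, since $x^2$ centralizes $A$. Given a non-abelian $H \le G$, the abelianness of $A_0$ forces some $w = b_1 x^j \in H$ with $j$ odd, so $w$ inverts $A$; since $H$ is non-abelian, there is $v = b x^{2m} \in H \cap A_0$ not commuting with $w$, and a short calculation gives $[w,v] = b^{-2}$, whence $b^2 \ne 1$. For $g = c x^k \in C_G(H)$, commutation with $v$ forces $k$ to be even, and then commutation with $w$ yields $c^2 = 1$. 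The only involutions of $A$ are $1$ and (when $h>0$) $x^{2^{n-1}}$, so $g \in \langle x^2\rangle$; since $w^2 = x^{2j}$ with $j$ odd generates $\langle x^2\rangle$, we conclude $g \in \langle x^2\rangle \le H$.

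For the reverse direction, assume $G$ is infinite, non-abelian, supersoluble, and in $\Y$. By Theorem~\ref{t:notranspositions}, $G$ has involutions; since supersoluble groups are finitely generated, the periodic subgroup $T$ is a finite characteristic subgroup of $G$, and $G$ admits a Zappa normal series $T = G_0 \le G_1 \le \dots \le G_s \le G$ with each $G_{i+1}/G_i$ infinite cyclic and $G/G_s$ a finite $2$-group. First, $T$ is abelian: otherwise $C_G(T) < T$ would be finite, contradicting Lemma~\ref{l:aperiodic}, which places every aperiodic element of $G$ in $C_G(T)$. Arguing by induction on $i$ along the lines of Theorem~\ref{t:notranspositions}, each $G_i$ is abelian; the inductive step now allows the inverting element to be a $2$-element (as permitted by Lemma~\ref{l:inverting}), rather than yielding an immediate contradiction as it did in the proof of Theorem~\ref{t:notranspositions}. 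In particular $G_s$ is abelian, and by analyzing the action of $G$ on $G_s$ together with the $2$-group quotient $G/G_s$, one extracts an abelian subgroup $A \supseteq G_s$ and a $2$-element $x \in G$ with $G = A\langle x\rangle$ and $a^x = a^{-1}$ for all $a \in A$.

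Finally, $A$ is finitely generated abelian (as a subgroup of the finitely generated group $G$), hence a product of cyclic groups. To see that the $2$-part of $A$ is cyclic, apply Lemma~\ref{l:inverting} to any $a \in A$ of infinite or odd order: it gives $C_G(\langle a, x\rangle) = \langle x^2\rangle$, and any involution $u \in A$ satisfies $u^x = u^{-1} = u$ and $[u,a]=1$, so $u \in \langle x^2\rangle$. Therefore $A$ has at most one involution, its $2$-part is cyclic $\langle d\rangle$ of some order $2^h$, and (when $h>0$) $d^{2^{h-1}}$ coincides with $x^{2^{n-1}}$, giving the stated decomposition. The main obstacle is the reverse direction's construction of a \emph{single} $2$-element $x$ that uniformly inverts $A$: the Zappa-style induction and the action of $G$ on $G_s$ have to be combined carefully, and the identity $C_G(\langle a, x\rangle) = \langle x^2\rangle$ from Lemma~\ref{l:inverting} is the key rigidity enforcing global coherence.
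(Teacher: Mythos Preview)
Your verification that the described structure implies $G\in\Y$ is fine and close in spirit to the paper's (the paper simply picks $y=x^ic\in H$ with $i$ odd and checks $C_G(y)=\langle y\rangle$, which is a little quicker).

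The substantive direction, however, has a genuine gap. You assert that ``the periodic subgroup $T$ is a finite characteristic subgroup of $G$'' and then set $G_0=T$. This is false for supersoluble groups with involutions: already in the infinite dihedral group $D_\infty=\langle a,x\mid x^2=1,\ a^x=a^{-1}\rangle$ (which is supersoluble and in $\Y$, indeed an instance of the theorem), every $a^ix$ has order $2$ while $(a^ix)(a^jx)=a^{i-j}$ is aperiodic for $i\neq j$, so the torsion elements do not form a subgroup at all, let alone a finite one. The reference to Robinson used in Theorem~\ref{t:notranspositions} works there precisely because that theorem assumes no involutions; with involutions the torsion set need not be a group, and your Zappa series with base $T$ never gets off the ground. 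Consequently the ``$T$ is abelian'' step (which also invokes Lemma~\ref{l:aperiodic}, a statement about finite \emph{normal} subgroups) and the subsequent induction both rest on an object that does not exist.

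Even granting some replacement for $T$, the remaining steps are sketched rather than proved: you say the induction ``allows the inverting element to be a $2$-element'' but do not explain what statement is being proved inductively (in Theorem~\ref{t:notranspositions} the induction shows $G_i\le Z(G)$, which certainly fails here), and ``one extracts an abelian subgroup $A\supseteq G_s$ and a $2$-element $x$'' hides essentially all of the work. The paper's route is different in an important way: it uses Zappa to produce a normal subgroup $C$ \emph{without involutions} with $G/C$ a finite $2$-group, applies Theorem~\ref{t:notranspositions} to $C$ to get $C$ abelian, then shows (via Lemma~\ref{l:inverting} and an infinite cyclic normal subgroup) that $|G:C_G(C)|=2$, that $B=C_G(C)$ is abelian, and that the chosen $x$ acts on $B$ by $a^x=a^{-1}x^{4\alpha}$, whence the genuine $A=\{b\in B\mid b^x=b^{-1}\}$ is extracted. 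Those intermediate facts (in particular $|G:C_G(C)|=2$ and the $x^{4\alpha}$ correction) are the heart of the argument and are absent from your outline.
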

\begin{proof}
Let $G$ be an infinite supersoluble $\Y$-group which is not abelian. By Zappa's theorem (see \cite[5.4.8]{Rob96}), there exists a non-trivial normal subgroup $C$ of $G$ such that $C$ has no involutions and $G/C$ is a finite 2-group. Thus $C$ is infinite, and therefore it is abelian by Theorem~\ref{t:notranspositions}. So $G/C$ is non-trivial. 

We claim that if $x\in G\setminus C_G(C)$ and $x^2\in C_G(C)$, then $a^x=a^{-1}$ for all $a\in C$. Indeed, from $x\notin C_G(C)$ it follows that there exists  $c\in C$ such that $c^x\ne c$. Since $x^2\in C_G(C)$ we get $1=[c,x^2]=[c,x]^x[c,x]$, hence $[c,x]^x=[c,x]^{-1}$. By Lemma~\ref{l:inverting},  the order of $x$ is a power of $2$ and $C_G(\langle[c,x],x\rangle)\leq\langle x^2\rangle$. Since $(cc^x)^x=cc^x$ it follows that $cc^x\in C_G(\langle[c,x],x\rangle)$. Thus $cc^x\in \langle x^2\rangle\cap C$, hence $cc^x=1$ and $c^x=c^{-1}$. Now let $a\in C$. Since $a^{(x^2)}=a$, we get $(aa^x)^x=aa^x$ and  $aa^x\in C_G(\langle c,x\rangle)\leq\langle x^2\rangle$. Therefore $aa^x\in C\cap \langle x^2\rangle=\{1\}$. Thus $a^x=a^{-1}$ and the claim is proved. Notice that this implies $Z(G)\leq \langle x^2\rangle$.

Since $G$ is supersoluble, there exists a $G$-invariant subgroup $S$ of $C$ which is infinite cyclic, say $S=\langle s\rangle$. Then $C_G(C)\le C_G(S)$. Suppose there exists $x\in C_G(S)\setminus C_G(C)$. Note that $G/C_G(C)$ is a finite $2$-group. Replacing $x$ by a suitable power, we may assume that $x^2\in C_G(C)$. But then the same argument as in the previous paragraph shows that $s^x=s^{-1}$, a contradiction. Thus $C_G(C)=C_G(S)$, and since $S$ is infinite cyclic, the group $G/C_G(C)$ is easily seen to be cyclic of order two.

Since $C_G(C)/C$ is a finite 2-group, hence nilpotent, it follows that $C_G(C)$ is nilpotent. Thus $C_G(C)$ is abelian by Theorem~\ref{t:nilpotent}. Write $B=C_G(C)$ and choose $x\in G$ so that $G=B\langle x\rangle$. As seen above, $x$ has order $2^n$ for some $n\geq 1$, and $a^x=a^{-1}$ for all $a\in C$. Since $G/B$ has order $2$ we get $x^2\in B$, and so $x^2\in Z(G)$. Therefore $Z(G)=\langle x^2\rangle$.

Our next aim is to prove that for all $a \in B$ there exists an integer $\alpha$ with $a^x=a^{-1}x^{4\alpha}$. Let $a\in B$ and $c\in C\setminus\{1\}$. From $a^{(x^2)}=a$ it follows that $(aa^x)^x=aa^x$, hence $aa^x\in C_G(\langle c,x\rangle)\leq\langle x^2\rangle$. Thus $aa^x=x^\beta$ for some even integer $\beta=2\gamma$. If $n=1$ then $x^\beta=1$, so $a^x=a^{-1}$ and the claim is proved. Thus it remains to consider the case $n>1$. If  $\gamma$ is odd, then  $(ax)^2=axax=x^{2(1+\gamma)}\in\langle x^4\rangle$. If $c\in C\setminus\{1\}$, then $c^{ax}=c^{-1}$, hence by Lemma~\ref{l:inverting} we get $C_G(\langle c,ax\rangle)\leq\langle (ax)^2\rangle\leq\langle x^4\rangle$. On the other hand $x^2\in C_G(\langle c,ax\rangle)$, so $x^2=1$, a contradiction since $n>1$. This proves that $\gamma$ is even, so  4 divides $\beta$, as required. Therefore  $\beta=4\alpha$ for some integer $\alpha$.

Now we consider  $A=\{c\in B\mid c^x=c^{-1}\}$, which is a non-trivial abelian normal subgroup of $G$. If $a\in B$, then $a^x=a^{-1}x^{4\alpha}$, and $(ax^{-2\alpha})^x=(ax^{-2\alpha})^{-1}$ proves $ax^{-2\alpha}\in A$, hence $a\in A\langle x\rangle$. This means that $G=B\langle x\rangle=A\langle x\rangle$. Write $A=\langle a_1\rangle\times\dots\times\langle a_t\rangle\times D$, where each $a_i$ has infinite or odd order and $D$ is a finite 2-group. 
If $D$ is non-trivial and $y\in D$ has order 2, then $y^x=y$, hence $y\in Z(G)$; this implies that $D$ has a unique element of order 2. Since $D$ is abelian, this means that $D$ is cyclic, say $D=\langle d\rangle$. Let $2^h$ be the order of $d$. If $h>0$, then $d^{(2^{h-1})}=x^{(2^{n-1})}$, hence the structure of $G$ is as required.

For the converse, let $G=A\langle x\rangle$ be as in the statement. Then $B=\langle x^2\rangle A$ is abelian and $|G:B|=2$. Thus, for every non-abelian $H\leq G$ there exists $y=x^ic\in H$ with $i$ odd and $c\in A$. Since $G=B\langle y\rangle$, we have $C_G(y)=C_B(y)\langle y\rangle=Z(G)\langle y\rangle=\langle x^2,y\rangle$. Note that $y^2=x^{2i}$, and so $\langle y^2\rangle=\langle x^2\rangle$.  Thus $C_G(H)\leq C_G(y)=\langle y\rangle\leq H$, hence $G\in\Y$, as desired.
\end{proof}

\section{On soluble $\Y$-groups}
\label{s:soluble}  
We now briefly discuss  soluble groups in $\Y$; we start with a result on the Fitting subgroup of an infinite soluble $\Y$-group.

\begin{theorem}
\label{t:fitting}
If $G\in\Y$ is infinite, then the Fitting subgroup of $G$ is abelian.
\end{theorem}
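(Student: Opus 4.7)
My plan is to argue by contradiction: assume $F=F(G)$ is non-abelian and manufacture a finite non-abelian normal subgroup of $G$, which will then contradict Lemma~\ref{l:normal}.

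I would first fix non-commuting elements $a,b\in F$. Since, by definition, $F$ is the subgroup of $G$ generated by all normal nilpotent subgroups of $G$, both $a$ and $b$ can be written as finite products of elements, each drawn from some normal nilpotent subgroup of $G$. Collecting the finitely many subgroups that occur as $N_1,\dots,N_k$ and forming their join $H=N_1 N_2\cdots N_k$, one has $a,b\in H$, so $H$ is non-abelian. Next I would invoke Fitting's theorem, which guarantees that a product of finitely many normal nilpotent subgroups of an arbitrary group is again normal and nilpotent; in particular $H$ is a normal nilpotent subgroup of $G$.

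The decisive step is now to apply Theorem~\ref{t:nilpotent}: since $\Y$ is subgroup-closed, $H\in\Y$, and being a non-abelian nilpotent $\Y$-group forces $H$ to be finite. But then $H$ is a finite non-abelian normal subgroup of the infinite $\Y$-group $G$, which is precisely the situation ruled out by Lemma~\ref{l:normal}. This contradiction shows that $F$ must be abelian.

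I do not foresee any real obstacle: the argument is simply a short assembly of Theorem~\ref{t:nilpotent}, Lemma~\ref{l:normal}, and Fitting's theorem. The only point requiring any care is the observation that two prescribed elements of $F(G)$ lie inside a single finitely generated join of normal nilpotent subgroups of $G$, and this is immediate from the definition of the Fitting subgroup.
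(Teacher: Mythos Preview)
Your proposal is correct and follows essentially the same route as the paper's proof: pick non-commuting $a,b\in F$, produce a non-abelian normal nilpotent subgroup $H$ containing them, apply Theorem~\ref{t:nilpotent} to force $H$ finite, and derive a contradiction with Lemma~\ref{l:normal}. The only cosmetic difference is that the paper takes $H=\langle a\rangle^G\langle b\rangle^G$ directly, while you build $H$ as a finite product $N_1\cdots N_k$ of normal nilpotent subgroups witnessing $a$ and $b$; both constructions rely on Fitting's theorem in the same way.
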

\begin{proof} Suppose the Fitting subgroup $F$ of $G$ is  non-abelian, and let $a,b\in F$ be non-commuting. Now $H=\langle a\rangle^G\langle b\rangle^G\unlhd G$ is normal, nilpotent, and non-abelian, so $H$ is finite by Theorem~\ref{t:nilpotent}; this contradicts  Lemma~\ref{l:normal}.
\end{proof}

In \cite{Del15} it has been proved that every infinite soluble $\X$-group is metabelian. However, it is not possible to bound the derived length of soluble $\Y$-groups, even in the torsion-free case. For example, the standard wreath product of $n$ copies of the infinite cyclic group is a finitely generated soluble commutative-transitive group, and therefore an $\Y$-group, with derived length~$n$, see \cite[Corollary~18]{Wu98}. Torsion-free polycyclic $\X$-groups are abelian, see \cite{Del15}; this result is no longer true for $\Y$-groups.

\begin{example}
\label{e:non-abelianpolycyclic}
Let $F=\langle a\rangle\times\langle b\rangle$ be free abelian of rank $2$ and $G=F\rtimes \langle c\rangle$, where $c$ is the automorphism of $F$ defined by $a^c=ab$ and $b^c=a$. Clearly,  $\langle c\rangle$ acts fixed-point-freely on $F$, hence $G$ is commutative-transitive, see\cite[Lemma~7]{Wu98}. Thus, $G$ is a torsion-free polycyclic $\Y$-group of derived length~$2$.
\end{example}
 
\begin{proposition}
\label{p:polycyclic}
Let $G\in \Y$ be soluble. If $G$ has a non-abelian subnormal subgroup which is polycyclic, then $G$ is polycyclic.
\end{proposition}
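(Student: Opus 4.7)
The plan is to fix a subnormal series $H = H_0 \unlhd H_1 \unlhd \cdots \unlhd H_n = G$ witnessing the subnormality of the given non-abelian polycyclic subgroup $H$, and prove by induction on $i$ that every $H_i$ is polycyclic; the case $i=n$ then yields the conclusion. The base case $i=0$ is the hypothesis, and since $H \le H_i$ is non-abelian, every $H_i$ is non-abelian, which is exactly what is needed to keep the inductive step applicable at each level.

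For the step from $H_i$ to $H_{i+1}$, assume $H_i$ is polycyclic and non-abelian. Since $H_{i+1}$ is a subgroup of $G \in \Y$ and contains the non-abelian normal subgroup $H_i$, the $\Y$-property forces $C_{H_{i+1}}(H_i) \le H_i$, and hence $C_{H_{i+1}}(H_i) = Z(H_i)$. Consequently, $H_{i+1}/Z(H_i)$ embeds into $\Aut(H_i)$, and as $G$ is soluble this embedding realises $H_{i+1}/Z(H_i)$ as a soluble subgroup of $\Aut(H_i)$.

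I would then invoke two classical external inputs: Auslander's theorem that $\Aut(H_i)$ is $\mathbb{Z}$-linear whenever $H_i$ is polycyclic, and Mal'cev's theorem that every soluble subgroup of $\GL_m(\mathbb{Z})$ is polycyclic. Combining them, $H_{i+1}/Z(H_i)$ is polycyclic; since $Z(H_i) \le H_i$ is polycyclic as well, so is $H_{i+1}$. Iterating up the series gives that $G = H_n$ is polycyclic.

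The $\Y$-step is essentially free: it does exactly the work of upgrading ``$H_i \unlhd H_{i+1}$'' to a faithful action of $H_{i+1}/Z(H_i)$ on $H_i$, so the induction reduces to a statement purely about soluble groups acting on a polycyclic group. The main obstacle, therefore, is that the argument relies on two nontrivial external results (Auslander's $\mathbb{Z}$-linearity of $\Aut$ of polycyclic groups, and Mal'cev's polycyclicity theorem for soluble integral linear groups); a self-contained proof would require a more direct bound on $H_{i+1}/Z(H_i)$, which does not seem easy to obtain without passing through linearity.
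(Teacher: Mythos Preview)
Your argument is correct. The induction along the subnormal series works exactly as you describe: the $\Y$-hypothesis gives $C_{H_{i+1}}(H_i)=Z(H_i)$, so $H_{i+1}/Z(H_i)$ embeds in $\Aut(H_i)$, and then linearity of $\Aut(H_i)$ together with Mal'cev's theorem finishes the step. (A minor remark: the $\mathbb{Z}$-linearity of $\Aut(P)$ for polycyclic $P$ is usually attributed to Auslander--Baumslag or Wehrfritz rather than to Auslander's original linearity theorem for $P$ itself; but the statement you use is indeed a theorem.)

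The paper's proof is organised differently. It invokes the $\Y$-property only once, on $H$ itself, to conclude that $C_G(H)<H$ is polycyclic; it then quotes a result of Endimioni and Sica (\cite[Theorem~2.1]{End10}) which, for a soluble group $G$ with a subnormal subgroup $H$ whose centraliser $C_G(H)$ is polycyclic, yields that $G$ is polycyclic. So the paper separates the two ingredients cleanly: one use of $\Y$, then a black-box structural lemma that already contains the climb up the subnormal series. Your approach instead re-applies $\Y$ at every level and replaces the black box by the Auslander/Mal'cev machinery; in effect you are unpacking (a version of) the Endimioni--Sica argument inside the proof. Both routes rest on nontrivial external input of comparable depth; the paper's is shorter to state, yours is more self-contained about where the subnormality is actually used.
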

\begin{proof} 
If $H\leq G$ is non-abelian, polycyclic, and  subnormal in $G$, then $C_G(H)< H$ is polycyclic, and the result follows by \cite[Theorem~2.1]{End10}.
\end{proof}

Recall that an element $g$ of a group $G$ is a right Engel element if for each $x\in G$ there exists a positive integer $n$ such that the left-normed commutator $[g,{}_nx]=1$. Let $R(G)$ denote the set of all right Engel elements of $G$. It is shown in  \cite[12.3.2 (ii)]{Rob96} that  $R(G)$ contains the hypercenter of $G$; on the other hand, it is well known that $R(G)$ may be larger than the hypercenter of $G$ even when $G$ is soluble.

\begin{proposition}
\label{p:locallysoluble}
Let $G\in \Y$ be locally soluble. If $G$ has a non-abelian finite subgroup which is contained in $R(G)$, then $G$ is locally finite.
\end{proposition}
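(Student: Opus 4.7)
The plan is to show that every finitely generated subgroup of $G$ is finite. Fix a finitely generated $L\leq G$ and set $K=\langle L,H\rangle$; it suffices to prove that $K$ is finite. Since $H$ is finite, $K$ is finitely generated, and since $G$ is locally soluble, $K$ is soluble.

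Every element of $H$ is a right Engel element of $G$ and, a fortiori, of $K$ (the condition $[h,{}_n x]=1$ for all $x\in G$ specializes to all $x\in K$), so $H\subseteq R(K)$. At this point I would invoke Gruenberg's theorem for finitely generated soluble groups, which says that $R(K)$ coincides with the hypercenter $\zeta_\infty(K)$ (see, e.g., \cite[12.3.7]{Rob96}). Since $H$ is finite, $H\subseteq \zeta_m(K)$ for some positive integer $m$. The subgroup $\zeta_m(K)$ is normal in $K$ and nilpotent of class at most $m$, so the normal closure $H^K$ is a nilpotent normal subgroup of $K$. Moreover $H^K$ is non-abelian, since it contains $H$.

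To finish, observe that $H^K\leq G\in\Y$ and $\Y$ is subgroup closed, so $H^K\in\Y$. By Theorem~\ref{t:nilpotent}, $H^K$ is abelian or finite; being non-abelian, it must be finite. Now $H^K$ is a finite non-abelian subgroup of the $\Y$-group $G$, hence $C_G(H^K)<H^K$ is finite, and so is $C_K(H^K)\leq C_G(H^K)$. Combined with the embedding $K/C_K(H^K)\hookrightarrow \Aut(H^K)$ into a finite group, this yields $|K|<\infty$, as needed. The principal non-elementary input of the argument, and its only real obstacle, is Gruenberg's identification of the right Engel set with the hypercenter in finitely generated soluble groups; everything else is a routine combination of Theorem~\ref{t:nilpotent} with the $\Y$-hypothesis applied to the finite non-abelian normal subgroup $H^K$.
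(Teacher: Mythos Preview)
Your argument is correct in outline and essentially unpacks, for this special situation, the Endimioni--Sica result that the paper simply quotes: the authors' entire proof is the single line ``$C_G(H)<H$ is finite as $G\in\Y$, hence $G$ is locally finite by \cite[Theorem~2.4]{End10}''. So the two routes agree in spirit, yours being an explicit version of what that citation encapsulates.

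The one step that deserves scrutiny is the appeal to Gruenberg. In the form given in \cite[12.3.7]{Rob96}, Gruenberg's theorem for a soluble group $G$ asserts that $L(G)=\bar L(G)$ coincides with the Hirsch--Plotkin radical and that the \emph{bounded} right Engel set $\bar R(G)$ lies in $\zeta_\omega(G)$; it does not assert $R(K)=\zeta_\infty(K)$ for unbounded right Engel elements, even when $K$ is finitely generated soluble. Consequently the follow-up ``$H$ finite $\Rightarrow H\subseteq\zeta_m(K)$ for some finite $m$'' is also not immediate, since it presumes the upper central series has reached $H$ by stage $\omega$. You should either locate a reference that gives exactly this statement (which is, in effect, part of what \cite[Theorem~2.4]{End10} establishes), or reroute the argument: use Heineken's observation $R(K)^{-1}\subseteq L(K)$ together with part~(i) of \cite[12.3.7]{Rob96} to place $H$, and hence $H^K$, inside the Hirsch--Plotkin radical of $K$, so that $H^K$ is locally nilpotent; then Proposition~\ref{p:locallynilpotent} combined with the finiteness of $C_G(H)$ can be used to finish. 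Either way, the rest of your proof (the use of Theorem~\ref{t:nilpotent} and the $N/C$ argument) is sound.
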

\begin{proof} 
Let $H\leq R(G)$ be non-abelian finite. 
Then $C_G(H)< H$ is finite as $G$ is an $\Y$-group, and hence $G$ is locally finite by \cite[Theorem~2.4]{End10}.
\end{proof}

\begin{corollary}
\label{c:locallyfinite}
Let $G\in \Y$ be locally soluble. If the hypercenter of $G$ contains a pair of non-commuting periodic elements, then $G$ is locally finite.
\end{corollary}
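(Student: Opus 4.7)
The plan is to reduce the corollary to Proposition~\ref{p:locallysoluble} by exhibiting a non-abelian \emph{finite} subgroup contained in the set $R(G)$ of right Engel elements. The two non-commuting periodic elements $a,b$ in the hypercenter $Z^{\ast}(G)$ are the natural candidates for generators of such a subgroup.

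Concretely, I would set $H = \langle a,b\rangle$; this is non-abelian by the choice of $a,b$. Since $H \le Z^{\ast}(G)$ and the hypercenter is, by construction, a hypercentral group, $H$ sits inside a hypercentral group. Invoking the standard fact that hypercentral groups are locally nilpotent (equivalently, every finitely generated subgroup of a hypercentral group is nilpotent, as recorded in Robinson \cite[12.2]{Rob96}), $H$ is a finitely generated nilpotent group. Because $H$ is generated by the two periodic elements $a$ and $b$, a finitely generated nilpotent group generated by periodic elements is finite, so $H$ is finite.

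Finally, the quotation from \cite[12.3.2(ii)]{Rob96} recalled just before Proposition~\ref{p:locallysoluble} tells us that the hypercenter of any group is contained in $R(G)$, so $H \le R(G)$. Thus $H$ is a non-abelian finite subgroup of $G$ contained in $R(G)$, and Proposition~\ref{p:locallysoluble} applies directly to conclude that $G$ is locally finite.

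The only point that requires mild care is step three: namely, that a finitely generated subgroup of a hypercentral group is nilpotent (and hence that a $2$-generated subgroup of $Z^{\ast}(G)$ generated by periodic elements is finite). This is the one place where we really use the hypercentrality rather than merely the ``each element is central in some $Z_\alpha(G)$'' structure, but it is classical and hence no serious obstacle. Once this is in hand, the corollary follows by direct quotation of Proposition~\ref{p:locallysoluble}.
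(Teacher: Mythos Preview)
Your proof is correct and follows essentially the same route as the paper: take $H=\langle a,b\rangle$, observe it is a finite non-abelian subgroup contained in $R(G)$, and apply Proposition~\ref{p:locallysoluble}. The paper's version is terser---it simply asserts that $H$ is finite without spelling out the hypercentral $\Rightarrow$ locally nilpotent $\Rightarrow$ finitely generated nilpotent with periodic generators $\Rightarrow$ finite chain---but the underlying argument is the same.
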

\begin{proof} Let $a$ and $b$ non-commuting periodic elements contained in the hypercenter of $G$. Then $H=\langle a,b\rangle$ is a finite non-abelian subgroup which is contained in $R(G)$, and the result follows from Proposition~\ref{p:locallysoluble}.
\end{proof}

\section{Finite $\Y$-groups}
\label{s:finite}
Here we study finite groups in $\Y$, in particular, $p$-groups. The proof of the following preliminary lemma is similar to the proof of  \cite[Lemma 2.1]{Del13}.

\begin{lemma}\label{l:finbasic1}
Let $\mathcal{F}$ be a subgroup closed class of finite groups. Then $\mathcal{F}\subseteq\Y$ if and only if for every $G\in \mathcal{F}$ the following holds: if $K\leq G$ is non-abelian, then $Z(G)\leq K$.
\end{lemma}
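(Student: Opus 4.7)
The plan is to prove the two implications separately, with the forward direction being essentially immediate and the backward direction requiring one small but crucial construction.

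For the forward implication, assume $\mathcal{F}\subseteq\Y$. Let $G\in\mathcal{F}$ and let $K\le G$ be non-abelian. Since $G$ itself is an $\Y$-group, Lemma~\ref{l:center} applies directly and gives $Z(G)\le K$. So the forward direction costs nothing beyond invoking the already-established centre lemma.

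For the backward implication, I would argue by contrapositive. Suppose $\mathcal{F}\not\subseteq\Y$; then some $G\in\mathcal{F}$ has a non-abelian subgroup $K$ that fails to be self-centralizing, i.e.\ there exists $c\in C_G(K)\setminus K$. The key step is to pass to the subgroup $H=\langle c\rangle K\le G$, which is a genuine subgroup because $c$ centralizes $K$. This $H$ has three useful properties at once: it is non-abelian (it contains the non-abelian $K$), it lies in $\mathcal{F}$ (because $\mathcal{F}$ is subgroup closed), and $c\in Z(H)$ (because $c$ commutes with every element of $K$ and with every power of itself, hence with every element of $H$). Now $K$ is a non-abelian subgroup of $H\in\mathcal{F}$, so the hypothesized property applied to the pair $(H,K)$ yields $Z(H)\le K$, whence $c\in K$, contradicting the choice of $c$. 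This establishes that the hypothesized property forces $\mathcal{F}\subseteq\Y$.

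There is no real obstacle here; the only idea needed is to replace the ambient group $G$ by the smaller subgroup $H=\langle c\rangle K$ inside which the offending centralizing element $c$ is actually central, so that the hypothesis (which talks about the centre of the ambient group) can be brought to bear on it.
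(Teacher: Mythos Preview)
Your proof is correct and uses the same key construction as the paper: given $c\in C_G(K)\setminus K$, pass to the subgroup $H=\langle c,K\rangle$, where $c$ becomes central. The paper, however, organizes the backward direction as an induction on $|G|$, splitting into the cases $H=G$ (where $z\in Z(G)\le K$ by the hypothesis) and $H<G$ (where the induction hypothesis gives $H\in\Y$, hence $z\in C_H(K)\le K$). Your version is a genuine streamlining: by observing directly that $c\in Z(H)$ and applying the hypothesis to the pair $(H,K)$, you eliminate both the case distinction and the induction. As a bonus, your argument never uses that the groups in $\mathcal{F}$ are finite, so it actually proves the lemma for an arbitrary subgroup-closed class.
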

\begin{proof}
If $\mathcal{F}\subseteq \Y$, then every non-abelian $K\leq G$ of $G\in\mathcal{F}$ satisfies $Z(G)\leq C_G(K)\leq K$. For the converse, denote by $\mathcal{F}_n$ the subset of $\mathcal{F}$ of groups  of order $n$, and proceed by induction on $n$. Clearly, $\mathcal{F}_1\subseteq \Y$, so we may assume that $n>1$. Let $G\in\mathcal{F}_n$ and let $K\leq G$ be non-abelian. Let $z\in C_G(K)$ and define $H=\langle K,z\rangle$; note that $z\in C_H(K)$. If $H = G$, then $z\in Z(G)$, hence $z\in K$ by assumption. If $H<G$, then $H\in \mathcal{F}_m$ for some $m<n$, hence $H\in\Y$ by the induction hypothesis, and therefore $z\in C_H(K)\leq K$. In conclusion, $C_G(K)\leq K$ for every non-abelian $K\leq G$, hence $G\in\Y$.
\end{proof}

\begin{lemma}
\label{l:finbasic2}
Let $\mathfrak{X}$ be a subgroup closed class of finite groups. Suppose that $Z(G)\le \Phi (G)$ for every non-abelian $G\in\mathfrak{X}$. Then $\mathfrak{X}\subseteq \Y$.
\end{lemma}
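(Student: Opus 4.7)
The plan is to invoke Lemma~\ref{l:finbasic1}: since $\mathfrak{X}$ is subgroup closed, it suffices to show that whenever $G\in\mathfrak{X}$ and $K\le G$ is non-abelian, then $Z(G)\le K$. So I would fix such a $G$ and $K$, and then work with the auxiliary subgroup $H=KZ(G)$, which is the natural place to look because it is the smallest subgroup of $G$ containing both $K$ and $Z(G)$ on which to apply the hypothesis.

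First I would observe that $H\le G$, so $H\in\mathfrak{X}$ by subgroup closure, and that $H$ is non-abelian because it contains the non-abelian subgroup $K$. The hypothesis then gives $Z(H)\le\Phi(H)$. Since elements of $Z(G)$ centralise every element of $G$, in particular every element of $H$, we have $Z(G)\le Z(H)$, and therefore
\[
Z(G)\le\Phi(H).
\]

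The key step is a Frattini non-generator argument inside $H$. From $Z(G)\le\Phi(H)$ and $H=KZ(G)$ we deduce $H=K\,\Phi(H)$; since $\Phi(H)$ is normal in the finite group $H$, this equality says that $K\cup\Phi(H)$ generates $H$. Because every element of $\Phi(H)$ is a non-generator of $H$, we can discard $\Phi(H)$ from the generating set and still generate $H$; thus $K=H$, which forces $Z(G)\le K$, as required. Combined with Lemma~\ref{l:finbasic1} this yields $\mathfrak{X}\subseteq\Y$.

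There is essentially no obstacle here: the whole argument rests on two standard inputs, namely Lemma~\ref{l:finbasic1} and the classical fact that $\Phi(H)$ consists of non-generators of the finite group $H$. The only substantive idea is to enlarge $K$ to $KZ(G)$, so that the hypothesis on $\Phi$ becomes directly applicable and the non-generator property can close the argument.
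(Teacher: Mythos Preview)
Your proof is correct and genuinely more direct than the paper's. The paper argues by minimal counterexample: it takes $G\in\mathfrak{X}\setminus\Y$ of least order, so that every proper subgroup of $G$ lies in $\Y$; then for a non-abelian proper $K\le G$ it chooses a maximal subgroup $M\ge K$, uses $M\in\Y$ to get $C_M(K)\le K$, and finally uses $Z(G)\le\Phi(G)\le M$ to obtain $Z(G)\le C_M(K)\le K$, contradicting Lemma~\ref{l:finbasic1}. Your argument bypasses the induction entirely: by passing to $H=KZ(G)\in\mathfrak{X}$ you apply the hypothesis at $H$ rather than at $G$, and the Frattini non-generator property immediately collapses $H$ to $K$. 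The paper's route has the mild advantage that it only ever invokes the hypothesis $Z(\cdot)\le\Phi(\cdot)$ at the single group $G$ itself, whereas yours invokes it at the auxiliary subgroup $H$; conversely, your route is shorter, avoids the counterexample framework, and makes the role of the Frattini condition completely transparent.
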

\begin{proof}
Let $G\in\mathfrak{X}\setminus\Y$ be of smallest possible order. Denote the family of all subgroups of $G$ by $\mathcal{F}$. Note that every proper subgroup of $G$ is in $\Y$. Let  $K\in\mathcal{F}$, and let $H$ be a non-abelian subgroup of $K$. We wish to prove that $Z(K)\le H$. If $K\neq G$, then $K\in\Y$, therefore
$Z(K)\le C_K(H)\le H$. If $K=G$, we can assume without loss of generality that $H\neq G$. Let $M$ be a maximal subgroup of $G$ containing $H$. As $M\in\Y$, we conclude that $C_M(H)\le H$. Since $Z(G)\le\Phi(G)$, it follows that $Z(G)\le C_M(H)\le H$, as required. Now  Lemma \ref{l:finbasic1} yields $\mathcal{F}\subseteq \Y$, which contradicts our assumption. 
\end{proof}

\begin{corollary}\label{c:finbasic3}
A non-abelian finite group $L$ lies in $\Y$ if and only if all its maximal subgroups lie in $\Y$ and $Z(L)\leq \Phi(L)$.
\end{corollary}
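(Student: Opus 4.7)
The forward direction is essentially immediate from results already established: if $L\in\Y$, then subgroup closure of $\Y$ gives that all maximal subgroups of $L$ lie in $\Y$, and Lemma~\ref{l:frattini} (applied to the non-abelian group $L$) gives $Z(L)\le\Phi(L)$.

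For the reverse direction, my plan is to reduce to Lemma~\ref{l:finbasic2} applied to the class $\mathfrak{X}$ of all subgroups of $L$, which is clearly subgroup closed (a subgroup of a subgroup of $L$ is a subgroup of $L$). To apply that lemma, I must verify that $Z(G)\le\Phi(G)$ for every non-abelian $G\in\mathfrak{X}$. For $G=L$ this is exactly the hypothesis. For a proper non-abelian subgroup $G<L$, I use that $G$ is contained in some maximal subgroup $M$ of $L$; by hypothesis $M\in\Y$, and since $\Y$ is subgroup closed this forces $G\in\Y$. As $G$ is non-abelian, Lemma~\ref{l:frattini} then yields $Z(G)\le\Phi(G)$. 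Hence the hypothesis of Lemma~\ref{l:finbasic2} holds, so $\mathfrak{X}\subseteq\Y$, and in particular $L\in\Y$.

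There is really no main obstacle here: the statement is a packaging of Lemmas~\ref{l:frattini} and \ref{l:finbasic2}. The only substantive observation is that ``every proper subgroup sits in a maximal subgroup that lies in $\Y$'' together with subgroup closure of $\Y$ propagates the class membership down to every proper subgroup, which is what lets the Frattini condition at the top level $L$ suffice. As an aside, one could also give a direct self-contained proof: given non-abelian $K\le L$ and $z\in C_L(K)$, form $H=\langle K,z\rangle$; if $H<L$ then $H$ sits in some maximal $M\in\Y$ so $z\in C_M(K)\le K$, while if $H=L$ then $z$ commutes with every generator of $L$, so $z\in Z(L)\le\Phi(L)\le M$ for any maximal $M\supseteq K$, reducing to the previous case. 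I would likely mention this direct route only if space allows, and otherwise simply cite Lemma~\ref{l:finbasic2}.
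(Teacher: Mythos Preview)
Your proof is correct and follows essentially the same approach as the paper: both take the class of all subgroups of $L$ and verify the hypothesis of one of the foundational lemmas. The only cosmetic difference is that you invoke Lemma~\ref{l:finbasic2} (together with Lemma~\ref{l:frattini}) whereas the paper verifies the hypothesis of Lemma~\ref{l:finbasic1} directly; your packaging is in fact slightly cleaner, and your ``direct route'' aside is essentially the paper's argument.
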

\begin{proof}
Clearly, if $L\in\Y$ and $M<L$ is maximal and non-abelian, then $M\in Y$, hence $Z(L)\leq C_L(M)\leq M$. If $M$ is abelian, then $Z(L)\leq M$, since otherwise $L=MZ(L)$ is abelian, a contradiction. 

Now consider the converse. Suppose the claim is not true and choose a minimal counterexample, that is, a group $L\notin \Y$ of smallest possible order such that $Z(L)\leq \Phi(L)$ and $M\in\Y$ for all maximal subgroups $M<L$. Let $\mathcal{F}$ be the set of all subgroups of $L$. Let $G\in \mathcal{F}$ and $K\leq G$ be non-abelian. We aim to show that $Z(G)\leq K$. If this holds for all such $G$ and $K$, then Lemma \ref{l:finbasic1} proves that  $L\in\mathcal{F}\subseteq \Y$, a contradiction. If $G<L$, then $G\leq M$ for some non-abelian maximal subgroup $M<L$. By assumption, $M\in\Y$, hence $G\in\Y$ and $Z(G)\leq C_G(K)\leq K$. Now consider $G=L$. If $K=G$, then $Z(G)\leq K$, so let $K\leq M<G$ for some non-abelian maximal subgroup $M<G$. By assumption, $Z(G)\leq M\in\Y$, hence $Z(G)\leq C_M(K)\leq K$.
\end{proof}


\subsection{Finite $p$-groups in $\Y$}
Motivated by Section \ref{s:nilpotent}, we now  concentrate on finite $p$-groups. The next two lemmas give the well-known characterizations of minimal non-abelian finite $p$-groups, see \cite[Lemma 2.2]{Xu08} and  \cite[Exercise 8a, p.\ 29]{Ber09}.

\begin{lemma}
\label{l:minnonab}
Let $G$ be a finite $p$-group. The following are equivalent:
\begin{ithm}
\item $G$ is minimal non-abelian.
\item $\d(G)=2$ and $|G'|=p$.
\item $\d(G)=2$ and $Z(G)=\Phi(G)$.
\end{ithm}
\end{lemma}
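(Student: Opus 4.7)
The plan is to prove the cyclic chain (i)$\Rightarrow$(iii)$\Rightarrow$(ii)$\Rightarrow$(i), using only standard $p$-group facts: $\Phi(G)=G'G^p$, $Z(G)$ non-trivial, and that $G/Z(G)$ cyclic forces $G$ abelian. For (i)$\Rightarrow$(iii), if $a,b\in G$ do not commute, then the non-abelian subgroup $\langle a,b\rangle$ equals $G$ by minimality, giving $\d(G)=2$. Every maximal subgroup is abelian, and since $\d(G)\ge 2$ the group $G$ is the union of its maximal subgroups, so each element of $\Phi(G)$ commutes with every element of $G$; thus $\Phi(G)\le Z(G)$. The index $[G:\Phi(G)]=p^2$, and neither $Z(G)=G$ (non-abelianness) nor $[G:Z(G)]=p$ (cyclic quotient forces $G$ abelian) is possible, so $Z(G)=\Phi(G)$.

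For (iii)$\Rightarrow$(ii), the equality $|G/Z(G)|=p^2$ makes $G/Z(G)$ elementary abelian and forces nilpotency class two; commutator bilinearity then gives $G'=\langle [a,b]\rangle$ whenever $G=\langle a,b\rangle$, and $[a,b]^p=[a^p,b]=1$ since $a^p\in \Phi(G)=Z(G)$, whence $|G'|=p$ (it is nontrivial since $G$ is non-abelian). For (ii)$\Rightarrow$(i), coprimality of $|\Aut(C_p)|=p-1$ and $p$ forces $G'\le Z(G)$; the same bilinearity yields $g^p\in Z(G)$ for every $g$, so $\Phi(G)=G'G^p\le Z(G)$. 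Any maximal subgroup $M$ satisfies $[M:\Phi(G)]=p$, hence $M=\langle c\rangle\Phi(G)$ for some $c$, and $\Phi(G)\le Z(G)$ then makes $M$ abelian. Every proper subgroup is therefore abelian, and $G$ is minimal non-abelian.

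The only mildly delicate point is the commutator identity $[x^p,y]=[x,y]^p$, used both to bound $|G'|$ in (iii)$\Rightarrow$(ii) and to place $G^p$ inside the centre in (ii)$\Rightarrow$(i); it is valid in any group of nilpotency class at most two and so requires no separate treatment when $p=2$, which is where the Hall--Petrescu correction terms would otherwise intrude. Beyond this, the argument is pure Frattini-versus-centre bookkeeping, and the two non-trivial inputs are the union-of-maximals argument (needing $\d(G)\ge 2$) and the $\Aut$-order argument placing a normal subgroup of order $p$ in the centre of a $p$-group.
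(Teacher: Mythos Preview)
Your proof is correct and complete. The paper itself does not supply a proof of this lemma; it records it as a well-known characterisation and refers the reader to \cite[Lemma 2.2]{Xu08} and \cite[Exercise 8a, p.\ 29]{Ber09}. So there is no in-paper argument to compare against, and your self-contained derivation via the cycle (i)$\Rightarrow$(iii)$\Rightarrow$(ii)$\Rightarrow$(i) is a welcome addition.

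A couple of minor remarks for polish. In (i)$\Rightarrow$(iii), your union-of-maximals step can be phrased more directly: since $\d(G)=2$, the group $G$ is not cyclic, so every $g\in G$ lies in some maximal subgroup $M$; as $\Phi(G)\le M$ and $M$ is abelian, $g$ centralises $\Phi(G)$. In (ii)$\Rightarrow$(i), you need $G$ to be non-abelian to conclude it is \emph{minimal} non-abelian; this is immediate from $|G'|=p\neq 1$, but it is worth stating explicitly. Finally, your commentary about Hall--Petrescu is accurate: in class two the identity $[x^p,y]=[x,y]^p$ holds on the nose for all primes, so no special care is needed at $p=2$.
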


\begin{lemma}\label{lem:minnonab}
Every minimal non-abelian $p$-group is isomorphic to one of:
\begin{ithm}
\item $K_1=Q_8$,
\item $K_2=\langle a,b\mid a^{p^m}=b^{p^n}=1,a^b=a^{1+p^{m-1}}\rangle$ with $m\geq 2$ and $n\geq 1$; this group is metacyclic of order $p^{n+m}$,
\item $K_3=\langle a,b,c\mid a^{p^m}=b^{p^n}=c^p=1, [a,b]=c, [c,a]=[c,b]=1\rangle$ with $m+n>2$ if $p=2$; this group is not metacyclic of order $p^{m+n+1}$, and the derived subgroup $K_3'$ is a maximal cyclic subgroup.
\end{ithm}
\end{lemma}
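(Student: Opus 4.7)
The plan is to derive the classification directly from Lemma \ref{l:minnonab}(ii), which says that a minimal non-abelian finite $p$-group $G$ is $2$-generated with $|G'|=p$. Write $G'=\langle c\rangle$; since $G'\leq Z(G)=\Phi(G)$, $G$ has nilpotency class $2$, $c$ is central of order $p$, and the commutator pairing induces a bilinear alternating map $G/G'\times G/G'\to G'$. For any generating pair $a,b$ of $G$, the commutator $[a,b]$ must generate $G'$ (otherwise $G$ would be abelian), so after relabeling I may assume $[a,b]=c$ and hence $a^b=ac$.

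Next I would refine the generating pair. Decomposing $G/G'=\langle\bar a\rangle\times\langle\bar b\rangle$ with $|\bar a|=p^m\geq|\bar b|=p^n$, we get $a^{p^m},b^{p^n}\in\langle c\rangle$. Using the class-$2$ identity $(xy)^{p^k}=x^{p^k}y^{p^k}[y,x]^{\binom{p^k}{2}}$, I would adjust the generators by central elements to try to force $b^{p^n}=1$. After this adjustment we are reduced to cases based on the residual value of $a^{p^m}$: either $a^{p^m}=1$, or $a^{p^m}=c$, or (only when $p=2$ and $m=n=1$) the adjustment is blocked and yields $a^2=b^2=c$.

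In the case $a^{p^m}=1$ we obtain the presentation
\[
\langle a,b,c\mid a^{p^m}=b^{p^n}=c^p=1,\;[a,b]=c,\;[c,a]=[c,b]=1\rangle
\]
of order $p^{m+n+1}$, which is the group $K_3$ from part (iii); one then checks that $\langle c\rangle$ is maximal cyclic because every element $a^i b^j c^k$ with $(i,j)\not\equiv(0,0)\pmod p$ has $p$-th power outside $\langle c\rangle$. In the case $a^{p^m}=c$, the element $a$ has order $p^{m+1}$, $c\in\langle a\rangle$, and $a^b=ac$ becomes $a^b=a^{1+p^m}$; putting $m':=m+1\geq 2$ this is exactly $K_2$ with parameters $(m',n)$, and metacyclicity is clear from $\langle a\rangle\unlhd G$ with $G/\langle a\rangle$ cyclic. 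The exception $Q_8=K_1$ arises precisely when $p=2$, $m=n=1$, and $a^2=b^2=c$; the restriction $m+n>2$ in case (iii) is what excludes a parameter overlap between $K_3$ and $Q_8$ at $p=2$.

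The main obstacle is the bookkeeping in the refinement of generators at $p=2$, where the binomial term $\binom{p^k}{2}$ behaves irregularly and genuinely obstructs the reduction to $a^{p^m}\in\{1,c\}$ in the Klein case, thereby forcing $Q_8$ to appear as a separate family. A further technical point is checking that the three families $K_1,K_2,K_3$ are pairwise non-isomorphic within the stated parameter ranges, which follows by inspecting $|G|$, the invariants of $G/G'$, and whether $G$ is metacyclic (noting that $K_3$ is non-metacyclic since $G/\langle c\rangle$ is elementary abelian of rank $2$ while $\langle c\rangle$ is not contained in a cyclic normal subgroup with cyclic quotient).
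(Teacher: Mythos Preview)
The paper does not prove this lemma at all: it is quoted as a well-known result with a reference to \cite[Exercise 8a, p.\ 29]{Ber09} and \cite[Lemma 2.2]{Xu08}. So there is no ``paper's proof'' to compare against; your sketch is simply supplying an argument where the paper chose to cite one.

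Your outline is the standard R\'edei-type derivation from Lemma~\ref{l:minnonab}(ii) and is essentially sound, but two small points deserve correction. First, your justification that $K_3'=\langle c\rangle$ is a maximal cyclic subgroup is misphrased: it is not true that every $a^ib^jc^k$ with $(i,j)\not\equiv(0,0)\pmod p$ has $p$-th power outside $\langle c\rangle$ (already for $m=n=1$, $p$ odd, all $p$-th powers are trivial). The correct check is that $c$ is not a $p$-th power: for $p$ odd, $(a^ib^jc^k)^p=a^{ip}b^{jp}\in\langle a^p,b^p\rangle$, and one verifies $c\notin\langle a^p,b^p\rangle$ using the direct decomposition of $G/\langle c\rangle$; the $p=2$ case uses the constraint $m+n>2$. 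Second, your remark that ``$G/\langle c\rangle$ is elementary abelian of rank $2$'' is false in general (it is $C_{p^m}\times C_{p^n}$); the non-metacyclicity of $K_3$ follows instead from the maximal cyclicity of $\langle c\rangle$ together with the fact that $G/\langle c\rangle$ is non-cyclic. With these two fixes your argument goes through.
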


Recall that a finite $p$-group $G$ lies in $\Y$ if and only if every minimal non-abelian $K\leq G$ satisfies $C_G(K)\leq K$; in particular, $Z(G)\leq K$. Since minimal non-abelian $p$-groups are classified, this poses strict conditions on $Z(G)$. We denote by $\Omega_1(G)$ the subgroup of $G$ generated by all elements of order $p$, and $G^p$ is the subgroup of $G$ generated by all $p$-th powers of elements of $G$. As usual, $C_n$ denotes a cyclic group of order $n$.

\begin{lemma}\label{l:centminnonab}
If $K$ is a minimal non-abelian $p$-group, then $Z(K)\cong C_{p^i}\times C_{p^j}$ or $Z(K)\cong C_p\times C_{p^i}\times C_{p^j}$
for some $i,j\geq 0$, and $\Omega_1(Z(K))$ is elementary abelian of rank 1, 2, or 3.
\end{lemma}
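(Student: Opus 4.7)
The plan is to dispatch the three families in Lemma \ref{lem:minnonab} one at a time, using the characterization in Lemma \ref{l:minnonab}(iii) that $Z(K)=\Phi(K)$, together with the standard identity $\Phi(K)=K'K^p$. In every case the center is a very small product of cyclic groups read off directly from the presentation.

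For $K_1=Q_8$ nothing needs to be done: $Z(K_1)\cong C_2$, which is of the shape $C_{p^i}\times C_{p^j}$ with $(i,j)=(1,0)$, and $\Omega_1(Z(K_1))$ has rank $1$.

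For $K_2=\langle a,b\mid a^{p^m}=b^{p^n}=1,\ a^b=a^{1+p^{m-1}}\rangle$, the group has class $2$ with $K_2'=\langle a^{p^{m-1}}\rangle$ of order $p$. Since $m\geq 2$ we have $a^{p^{m-1}}\in\langle a^p\rangle$, so $\Phi(K_2)=K_2'\langle a^p,b^p\rangle=\langle a^p,b^p\rangle$. A short bilinearity argument ($K_2'\leq Z(K_2)$ makes commutators $\Z$-bilinear) confirms that $\langle a^p,b^p\rangle\leq Z(K_2)$, and Lemma~\ref{l:minnonab}(iii) forces equality. From the semidirect-product structure $K_2=\langle a\rangle\rtimes\langle b\rangle$ one gets $\langle a\rangle\cap\langle b\rangle=1$, hence
\[
Z(K_2)=\langle a^p\rangle\times\langle b^p\rangle\cong C_{p^{m-1}}\times C_{p^{n-1}},
\]
which is of the first claimed form; its $\Omega_1$ is elementary abelian of rank $\leq 2$.

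For $K_3=\langle a,b,c\mid a^{p^m}=b^{p^n}=c^p=1,\ [a,b]=c,\ [c,a]=[c,b]=1\rangle$, again $K_3'=\langle c\rangle$ has order $p$ and $K_3$ has class $2$, so the same argument gives $\langle c,a^p,b^p\rangle\leq Z(K_3)=\Phi(K_3)$. To conclude equality and independence, I would appeal to Lemma~\ref{l:minnonab}: since $d(K_3)=2$ and $Z(K_3)=\Phi(K_3)$, one has $|K_3/Z(K_3)|=p^2$, so $|Z(K_3)|=p^{m+n-1}$; on the other hand the orders of $a^p$, $b^p$, $c$ multiply to exactly $p^{m-1}\cdot p^{n-1}\cdot p=p^{m+n-1}$, which forces the sum of the three cyclic subgroups to be direct. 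Hence
\[
Z(K_3)\cong C_{p^{m-1}}\times C_{p^{n-1}}\times C_p,
\]
which is of the second claimed form, and $\Omega_1(Z(K_3))$ is elementary abelian of rank $\leq 3$.

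There is no real obstacle; the only point to be careful about is justifying that the product $\langle c,a^p,b^p\rangle$ inside $K_3$ is internal direct, for which the order count above is the cleanest route (it avoids any delicate discussion of normal form words in the presentation of $K_3$). The allowance for $i=0$ or $j=0$ in the statement is exactly what absorbs the boundary cases $m=1$ or $n=1$ in $K_2$ and $K_3$, so no separate subcase analysis is needed.
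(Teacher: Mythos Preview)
Your proof is correct and follows exactly the same route as the paper: a case split over the three families of Lemma~\ref{lem:minnonab}, identifying $Z(K)$ with $\langle a^p,b^p\rangle$ (resp.\ $\langle a^p,b^p,c\rangle$) in the metacyclic (resp.\ non-metacyclic) case; the paper's own proof is in fact terser and omits the justification of the direct-product structure that you supply. One tiny point to tighten in your $K_3$ argument: the order count alone does not force directness until you also know $\langle c,a^p,b^p\rangle=Z(K_3)$, which follows immediately since $K_3/\langle c,a^p,b^p\rangle$ is abelian and generated by two elements of order at most $p$, hence has order at most $p^2$.
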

\begin{proof}
Let $K_1,K_2,K_3$ be as in Lemma \ref{lem:minnonab}. If $K\cong K_1$, then $Z(G)\cong C_2$. If $K\cong K_2$, then $Z(K)\cong\langle a^p,b^p\rangle$; if $K\cong K_3$, then $Z(K)\cong \langle a^p, b^p,c\rangle$
\end{proof}
The next result deals with metacyclic $p$-groups. We note that not all metacyclic groups are in $\Y$, since one can easily show, for example, that $D_{12}\notin\Y$. The situation is quite different if we restrict to $p$-groups.

\begin{proposition}
\label{p:metacyclic}
Every metacyclic $p$-group is in $\Y$.
\end{proposition}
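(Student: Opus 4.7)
\medskip

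\noindent\textbf{Proof plan.} By Corollary~\ref{c:minimal}, it suffices to show that $C_G(K)<K$ for every minimal non-abelian subgroup $K\leq G$. Since subgroups of metacyclic groups are metacyclic, any such $K$ is itself metacyclic and minimal non-abelian; consulting the classification in Lemma~\ref{lem:minnonab}, $K$ must be of type $K_1=Q_8$ or of type $K_2$, since $K_3$ is explicitly not metacyclic. In either case $|K'|=p$ by Lemma~\ref{l:minnonab}.

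Suppose, for a contradiction, that there exists $c\in C_G(K)\setminus K$, and set $H=K\langle c\rangle$. Since $c$ centralises $K$, one has $H'=[K,K]=K'$, so $|H'|=p$. Moreover $H\leq G$ is metacyclic and non-abelian, so $d(H)\leq 2$ (as every metacyclic group is $2$-generated), and $d(H)=1$ is impossible since $H$ is non-abelian. Hence $d(H)=2$ and $|H'|=p$, so Lemma~\ref{l:minnonab} forces $H$ itself to be minimal non-abelian. But then every proper subgroup of $H$ is abelian, contradicting the fact that $K$ is a proper (non-abelian) subgroup of $H$.

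This contradiction yields $C_G(K)\leq K$ for every minimal non-abelian $K\leq G$, and Corollary~\ref{c:minimal} gives $G\in\Y$.

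\medskip

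\noindent\textbf{Main obstacle.} The only delicate point is ruling out the possibility that $K$ has type $K_3$, which is handled cleanly by the observation that $K_3$ is not metacyclic while metacyclicity is inherited by subgroups. Everything else is a direct application of the minimal non-abelian classification (Lemma~\ref{lem:minnonab}), together with the basic observation $[K\langle c\rangle,K\langle c\rangle]=K'$ when $c$ centralises $K$, so no genuine case analysis on the type of $K$ is needed.
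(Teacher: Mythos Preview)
Your proof is correct and takes a genuinely different route from the paper's. The paper argues via Lemma~\ref{l:finbasic2}: since metacyclic $p$-groups form a subgroup-closed class, it suffices to check $Z(G)\le\Phi(G)$ for every non-abelian metacyclic $p$-group, and this is verified by invoking King's explicit presentation \cite{Kin73} together with the description of $Z(G)$ given there. Your argument instead goes through Corollary~\ref{c:minimal} and the characterisation of minimal non-abelian $p$-groups in Lemma~\ref{l:minnonab}: if $c\in C_G(K)\setminus K$ then $H=K\langle c\rangle$ is still metacyclic (hence $2$-generated) with $H'=K'$ of order $p$, forcing $H$ itself to be minimal non-abelian, which is absurd since $K<H$ is non-abelian. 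Your approach is more self-contained, avoiding the external reference to King's classification entirely; the paper's approach, on the other hand, illustrates the general machinery of Lemma~\ref{l:finbasic2}.

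One small remark on your commentary: the observation that $K$ cannot have type $K_3$ is actually superfluous, since Lemma~\ref{l:minnonab} gives $|K'|=p$ for \emph{every} minimal non-abelian $p$-group. The only place metacyclicity is genuinely used is for $H$, to conclude $d(H)\le 2$; the type of $K$ plays no role.
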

\begin{proof}
Let $\mathfrak{X}$ be the class of  metacyclic $p$-groups, and let $G\in\mathfrak{X}$ be non-abelian. Note that all subgroups of $G$ are also metacyclic. By Lemma \ref{l:finbasic2} it suffices to prove that $Z(G)$ is contained in $\Phi(G)$. By \cite[Theorem 3.1]{Kin73}, the group $G$ can be given by a reduced presentation
$$G=\langle a,b\mid a^{p^m}=1,\; b^{p^n}=a^{p^{m-s}},\; a^b=a^{\varepsilon+p^{m-c}}\rangle,$$
where the precise numerical restrictions on $m$, $n$, $s$ and $c$ are given in \cite[Theorem 3.1]{Kin73}, and $\varepsilon =1$ when $p>2$, and $\varepsilon=\pm 1$ when $p=2$. Furthermore, it is clear that $\Phi (G)=\langle a^p,b^p\rangle$, and \cite[Proposition 4.10]{Kin73} gives $Z(G)=\langle a^{p^u},b^{p^v}\rangle$, where $u=v=c$ when $\varepsilon=1$, and $u=m-1$, $v=\max\{ 1,c\}$ when $\varepsilon=-1$. From here the assertion easily follows. 
\end{proof}

\begin{lemma}
Let $G\in \Y$ be a $p$-group. If $g\in G \setminus \Phi(G)$, then $C_G(g)$ is abelian.
\end{lemma}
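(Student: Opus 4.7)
The plan is to argue by contradiction. Suppose $H := C_G(g)$ is non-abelian. Since $G$ is finite, so is $H$, and therefore $H$ contains a minimal non-abelian subgroup $K$ (chosen, for instance, as a non-abelian subgroup of smallest order). By Corollary~\ref{c:minimal}, the hypothesis $G\in\Y$ forces $C_G(K)<K$; in particular, every element of $G$ that centralizes $K$ lies in $K$.

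Next I would observe that $g$ centralizes $K$: by definition of $H = C_G(g)$, every element of $H$ commutes with $g$, and $K\leq H$. Thus $g\in C_G(K)\leq K$, and moreover $g\in Z(K)$. Now $K$ is a minimal non-abelian $p$-group, so by Lemma~\ref{l:minnonab}(iii) we have $Z(K)=\Phi(K)$, giving $g\in\Phi(K)$.

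The last step, and the real heart of the argument, is to transfer this to the Frattini subgroup of the ambient group. Since $K\leq G$ and $G$ is a finite $p$-group (so $\Phi(G)=G'G^p$, and analogously for $K$), we have $K'\leq G'$ and $K^p\leq G^p$, hence $\Phi(K)=K'K^p\leq G'G^p=\Phi(G)$. Therefore $g\in\Phi(G)$, contradicting the assumption that $g\notin\Phi(G)$.

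No step presents a serious technical obstacle: the key move is simply the containment $\Phi(K)\leq\Phi(G)$ combined with the characterization $Z(K)=\Phi(K)$ for minimal non-abelian $p$-groups, which together turn the $\Y$-condition ``$g\in K$ for every minimal non-abelian $K\leq C_G(g)$'' into the desired statement about $\Phi(G)$.
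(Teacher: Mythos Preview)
Your argument is correct, and it proceeds along a genuinely different line from the paper's own proof. The paper does not invoke minimal non-abelian subgroups at all: instead it picks a maximal subgroup $M$ with $g\notin M$ (possible since $g\notin\Phi(G)$), shows that $C_M(g)$ is abelian (otherwise the $\Y$-condition would give $g\in C_G(C_M(g))\leq C_M(g)\leq M$), and then observes that $C_G(g)=\langle g\rangle C_M(g)$ is generated by the pairwise commuting set $\{g\}\cup C_M(g)$.

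Your route trades that explicit decomposition for structural information about minimal non-abelian $p$-groups: the identity $Z(K)=\Phi(K)$ from Lemma~\ref{l:minnonab}, together with the monotonicity $\Phi(K)=K'K^p\leq G'G^p=\Phi(G)$ valid in any finite $p$-group, immediately pins $g$ inside $\Phi(G)$. This is shorter and makes the role of the hypothesis $g\notin\Phi(G)$ completely transparent. The paper's argument, on the other hand, is more self-contained in that it uses only the definition of $\Y$ and nothing about the internal structure of minimal non-abelian groups; it also yields the slightly finer by-product $C_G(g)=\langle g\rangle C_M(g)$.
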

\begin{proof}
Let $M$ be a maximal subgroup of $G$ such that $G =\langle g,M\rangle$. First, we show that $K = C_M(g)$ is abelian. If this is not the case, then, since $G, M\in \Y$, we have
$C_M(K) = C_G(K) = Z(K)$. By definition, $g$ commutes with $K$, so $g\in C_G(K)\leq Z(K)$, which is not possible since $K\leq M$ and $g\notin M$. Thus, $K=C_M(g)$ must be abelian. Note that every $h\in G$ can be written as $h=g^j m$ for some $m\in M$ and
$j=0,..,p-1$. Now $h\in C_G(g)$ if and only if   $g^{j+1} m = g g^jm = g^jmg = g^{j+1} m^g$, if and only if  $m^g = m$, if and only if $m\in C_M(g) = K$. This proves that $C_G(g) = \langle g,K\rangle$. As shown above, $K$ is abelian, hence also  $C_G(g)$ is abelian.
\end{proof}

\begin{lemma}\label{l:smallppower}
All $p$-groups of order $p$, $p^2$, and $p^3$ lie in $\Y$. A $p$-group of order $p^4$ lies in $\Y$ if and only if it is abelian, has maximal class, or $\Phi(G)=Z(G)$.
\end{lemma}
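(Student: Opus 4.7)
Groups of order $p$ and $p^2$ are abelian, hence in $\Y$. For $|G|=p^3$ and $G$ non-abelian, every proper subgroup has order at most $p^2$ and is abelian, so $G$ is its only non-abelian subgroup; as $C_G(G)=Z(G)\lneq G$, we conclude $G\in\Y$.

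Now let $|G|=p^4$. Every maximal subgroup of $G$ has order $p^3$ and lies in $\Y$ by the previous paragraph, so Corollary \ref{c:finbasic3} shows that a non-abelian $G$ of order $p^4$ belongs to $\Y$ if and only if $Z(G)\leq\Phi(G)$. Each of the three listed sufficient conditions ensures this: abelian $G$ is trivially in $\Y$; for $G$ of maximal class, $Z(G)=\gamma_3(G)\leq G'\leq\Phi(G)$; and $\Phi(G)=Z(G)$ gives the inequality directly.

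For the converse, let $G\in\Y$ be non-abelian of order $p^4$, so $Z(G)\leq\Phi(G)$ by Lemma \ref{l:frattini}. Since $G$ is non-abelian, $d(G)\in\{2,3\}$, whence $|\Phi(G)|\in\{p,p^2\}$. If $d(G)=3$, then $|\Phi(G)|=p$, and the nontrivial $Z(G)\leq\Phi(G)$ must equal $\Phi(G)$. If $d(G)=2$ and the nilpotency class is $3$, then $G$ has maximal class. The remaining subcase, $d(G)=2$ with class $2$, is the main obstacle: here I aim to show $|Z(G)|=p^2$, which together with $Z(G)\leq\Phi(G)$ and $|\Phi(G)|=p^2$ forces $\Phi(G)=Z(G)$. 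Assuming $|Z(G)|=p$ for contradiction, $G/Z(G)$ is a $2$-generator non-cyclic abelian group of order $p^3$, hence $\cong\Z_{p^2}\times\Z_p$. Picking $a,b\in G$ projecting to generators of orders $p^2$ and $p$, the class-$2$ commutator identity $[a^p,b]=[a,b]^p$ together with $[a,b]\in G'\leq Z(G)\cong\Z_p$ gives $[a^p,b]=1$; hence $a^p$ commutes with both $a$ and $b$, so $a^p\in Z(G)$, contradicting $\bar a^p\neq 0$ in $G/Z(G)$.
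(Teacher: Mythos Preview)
Your proof is correct and follows essentially the same strategy as the paper: both reduce the $p^4$ case via Corollary~\ref{c:finbasic3} to the condition $Z(G)\leq\Phi(G)$, and both argue that for non-abelian $G$ of class~$2$ the strict inequality $Z(G)<\Phi(G)$ is impossible. The main difference lies in how this last step is handled. The paper observes that $Z(G)<\Phi(G)$ forces $|Z(G)|=p$ and hence $Z(G)=G'$, then invokes an external result (\cite[p.~11, Exercise~40]{Ber09}) stating that $Z(G)=G'$ in a non-abelian group of order $p^4$ implies $|G/Z(G)|=p^2$. You instead supply a direct, self-contained argument using the class-$2$ identity $[a^p,b]=[a,b]^p$, which is arguably cleaner and avoids the citation. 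Your explicit case split on $d(G)\in\{2,3\}$ is a minor organizational difference---the paper treats both values simultaneously within the class-$2$ discussion---but the content is the same.
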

\begin{proof}
Groups of order $p^n$ with $n\leq 3$ are abelian or minimal non-abelian, hence lie in $\Y$. Let $G$ be a group of order $p^4$. If $G$ is abelian, then $G\in \Y$. If $G$ has maximal class, then every maximal subgroup $M<G$ satisfies $Z(G)<G'<M$; clearly, $M\in\Y$ since $|M|=p^3$, hence $G\in\Y$ by Corollary \ref{c:finbasic3}. Now suppose $G$ has nilpotency class 2, hence $G>Z(G)\geq G'\geq 1$. Since every maximal subgroup $M<G$ lies in $\Y$, we have $G\in\Y$ if and only if $Z(G)\leq \Phi(G)$. Since $G$ is not abelian, we have $|G:\Phi(G)|\geq p^2$. If $Z(G)<\Phi(G)$, then we must have $G>\Phi(G)>Z(G)=G'>1$; but $Z(G)=G'$ for a non-abelian group $G$ of order $p^4$ implies that $G/Z(G)$ has order $p^2$, see \cite[p.\ 11, Exercise 40]{Ber09}, a contradiction. Thus, $\Phi(G)=Z(G)$.
\end{proof}


We now consider $p$-groups of maximal class in more detail; the following remark recalls some important properties.

\begin{remark}\label{r:maxclass} Let $G$ be  a group of order $p^n$ of maximal class, and suppose $n\geq 4$. By  \cite[p. 56]{Lee02}, there exists a chief series $G>P_1>\ldots>P_n=1$ with $P_i=P_i(G)=\gamma_i(G)$ for $i\geq 2$, and $P_1=P_1(G)=C_G(P_2/P_4)$; we define $P_m=P_m(G)=1$ if $m>n$. Note that $[P_i,P_j]\leq P_{i+j}$ for all $i,j\geq 1$. We note that $P_2,\ldots,P_n$ are the unique normal subgroups of $G$ of index greater than $p$, see \cite[Proposition 3.1.2]{Lee02}.
Following \cite[Definition 3.2.1]{Lee02}, we say the {\it degree of commutativity} of a $p$-group of maximal class is the largest integer $\ell$ with the property that  $[P_i,P_j]\le P_{i+j+\ell}$ for all $i,j\ge 1$ if $P_1$ is not abelian, and $\ell =n-3$ if $P_1$ is abelian. If $n>p+1$, then  $G$ has positive degree of commutativity, see \cite[Theorem 3.3.5]{Lee02}.  If $s\in G\setminus P_1$ and $s_1\in P_1\setminus P_2$, then $s$ and $s_1$ generate $G$; for $i\leq n$ define $s_i=[s_{i-1},s]$. If $G$ has positive degree of commutativity, then $s_i\in P_i\setminus P_{i+1}$ for all $i<n$, see \cite[Lemma 3.2.4]{Lee02}.
\end{remark}

\begin{lemma}
\label{l:pmaxclass}
If a $p$-group $G$ of maximal class has an abelian maximal subgroup, then $G\in\Y$.
\end{lemma}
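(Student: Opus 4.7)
The plan is to apply Corollary~\ref{c:finbasic3} by induction on $|G|=p^n$. The base cases $n\leq 4$ are handled by Lemma~\ref{l:smallppower}, since every $p$-group of maximal class of order at most $p^4$ lies in $\Y$. So assume $n\geq 5$ and let $A=P_1$ be the abelian maximal subgroup. Corollary~\ref{c:finbasic3} has two hypotheses to verify. The first, $Z(G)\leq \Phi(G)$, is immediate since $d(G)=2$ gives $\Phi(G)=G'=P_2$, and $Z(G)=P_{n-1}\leq P_2$. For the second, we need every maximal subgroup of $G$ to lie in $\Y$; $A$ itself is abelian and hence trivially in $\Y$, so it suffices to handle an arbitrary other maximal subgroup $M$. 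The strategy is to show that $M$ is again a $p$-group of maximal class with an abelian maximal subgroup, so that the induction hypothesis yields $M\in\Y$.

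For such an $M$, first $MA=G$ gives $|M\cap A|=p^{n-2}$, and combined with $P_2=G'\leq M\cap A$ this forces $M\cap A=P_2$, which is an abelian subgroup of index $p$ in $M$. Choose $m\in M\setminus A$, so $M=\langle m\rangle P_2$ and $m=s^j c$ for some $s\in G\setminus A$, $c\in A$, and $j$ with $p\nmid j$. Since $P_2\leq A$ is abelian and $c$ centralizes $P_2$, we obtain $\gamma_2(M)=[m,P_2]=[s^j,P_2]$; and since $P_1$ is abelian, $[P_i,P_k]=1$ for all $i,k\geq 1$, so $\gamma_{i+1}(M)$ similarly reduces to $[s^j,\gamma_i(M)]$.

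The key computation is $[s^j,P_i]=P_{i+1}$ for $2\leq i\leq n-2$. By Remark~\ref{r:maxclass}, since $P_1$ is abelian the degree of commutativity is $\ell=n-3\geq 2>0$, and so $s_k=[s_{k-1},s]\in P_k\setminus P_{k+1}$ for all $k<n$. For $x\in P_i$ with $x\equiv s_i^a\pmod{P_{i+1}}$, a standard commutator expansion yields $[s^j,x]\equiv s_{i+1}^{ja}\pmod{P_{i+2}}$, so $[s^j,x]\in P_{i+1}\setminus P_{i+2}$ whenever $a\not\equiv 0\pmod p$. Hence the endomorphism $[s^j,\cdot]$ of $P_2$ has kernel exactly $Z(G)$ and image exactly $P_3$, giving $\gamma_2(M)=P_3$; the same argument gives $\gamma_i(M)=P_{i+1}$ for all $i\geq 2$. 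Therefore $M$ has nilpotency class $n-2$, is of maximal class with abelian maximal subgroup $M\cap A$, and by the inductive hypothesis lies in $\Y$.

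The main obstacle is the inductive computation of the lower central series of $M$: one must show that $[s^j,\cdot]$ does not collapse on any $P_i/P_{i+1}$, so that $\gamma_i(M)$ is exactly $P_{i+1}$ rather than a proper subgroup. The hypothesis that $P_1$ is abelian is essential here, both because it yields positive degree of commutativity (enabling the $s_i$ to serve as a graded basis) and because it makes all internal commutators $[P_i,P_k]$ vanish, leaving only contributions from $m$.
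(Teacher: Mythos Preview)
Your proof is correct and follows the same inductive skeleton as the paper: reduce to showing that every non-abelian maximal subgroup again has maximal class with an abelian maximal subgroup, then invoke the induction hypothesis together with Corollary~\ref{c:finbasic3}. The paper's version is organised as a minimal counterexample plus Lemma~\ref{l:finbasic2}, and it applies the argument to \emph{all} proper non-abelian subgroups rather than just maximal ones, but this is only a cosmetic difference.

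The one substantive difference is how the ``maximal class'' property of the subgroup is obtained. The paper simply cites \cite[p.\ 27, Exercise~4]{Ber09}: since $H\cap A$ is an abelian maximal subgroup of $H$ and (implicitly) $|Z(H)|=p$, that exercise gives maximal class for free. You instead compute the lower central series of $M$ directly. Your computation is sound, but somewhat over-engineered: once you know $P_1=A$ is abelian, the map $x\mapsto [m,x]$ is a genuine homomorphism $P_i\to P_{i+1}$ whose kernel is $C_{P_i}(m)=Z(G)$ (because $\langle m,P_1\rangle=G$), and surjectivity follows by comparing orders. The appeal to positive degree of commutativity and the graded basis $s_i$ is therefore unnecessary, though not wrong. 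One small point worth making explicit is why the abelian maximal subgroup must equal $P_1$: since $G'=P_2\leq A$ and $A$ is abelian, $A$ centralises $P_2$, hence $A\leq C_G(P_2/P_4)=P_1$, forcing $A=P_1$.
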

\begin{proof}
Let $G\notin\Y$ be a $p$-group of maximal class that has an abelian maximal subgroup $A$, and suppose that it is of smallest possible order. Let $H$ be a proper non-abelian subgroup of $G$. It follows from \cite[p.\ 27, Exercise 4]{Ber09} that $H$ is also of maximal class; now note that $H\cap A$ is an abelian maximal subgroup of $H$, since $p=|G:A|=|HA:A|=|H:H\cap A|$. Thus, all proper subgroups of $G$ belong to $\Y$. Taking $\mathcal{F}$ to be the family of all subgroups of $G$, we get a contradiction similarly as in the proof of Lemma \ref{l:finbasic2}.
\end{proof}

There exist $3$-groups of maximal class all of whose  maximal subgroups are non-abelian, see \cite{Bla58}. Nevertheless, we can prove the following.

\begin{lemma}
\label{l:3maxclass}
The $2$- and $3$-groups of maximal class lie in $\Y$.
\end{lemma}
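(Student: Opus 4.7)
The proof naturally splits by prime.

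For $p=2$, every $2$-group of maximal class is dihedral, semidihedral, or generalized quaternion, and each of these contains a cyclic (in particular, abelian) maximal subgroup. Lemma~\ref{l:pmaxclass} therefore yields the result immediately.

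For $p=3$ I proceed by induction on $n$, where $|G|=3^n$. The cases $n\le 4$ are covered by Lemma~\ref{l:smallppower}. Assume now $n\ge 5$ and that every $3$-group of maximal class of order strictly less than $3^n$ already lies in $\Y$. The plan is to verify the two hypotheses of Corollary~\ref{c:finbasic3}. First, $Z(G)=P_{n-1}$ and $\Phi(G)=G'=P_2$, so $Z(G)\le\Phi(G)$ is immediate from $n-1\ge 2$.

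It remains to check that each maximal subgroup of $G$ lies in $\Y$. Since $|G/\Phi(G)|=9$, there are exactly $p+1=4$ maximal subgroups: the fundamental subgroup $P_1$ and three subgroups of the form $M=\langle s,P_2\rangle$ with $s\in G\setminus P_1$. Because $n>p+1$, the group $G$ has positive degree of commutativity and admits generators $s,s_1,s_2,\ldots$ with $s_i\in P_i\setminus P_{i+1}$ and $s_i=[s_{i-1},s]$ (Remark~\ref{r:maxclass}). For every $M\ne P_1$, a direct commutator computation using $[s,s_i]=s_{i+1}^{-1}$ and $[P_i,P_j]\le P_{i+j+\ell}$ (with degree of commutativity $\ell\ge 1$) yields $\gamma_i(M)=P_{i+1}$ for all $i\ge 2$, so $M$ is a $3$-group of maximal class of order $3^{n-1}$ and belongs to $\Y$ by the inductive hypothesis.

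The main obstacle is handling the fundamental subgroup $P_1$, which in general is not of maximal class (its nilpotency class is only bounded by $\lceil (n-1)/(1+\ell)\rceil$, and $\ell$ may equal $1$). Here I plan to invoke Blackburn's theorem that every $3$-group of maximal class is metabelian, so that $P_2=G'$ is abelian and hence an abelian subgroup of $P_1$ of index $3$. If $P_1$ is itself of maximal class, Lemma~\ref{l:pmaxclass} gives $P_1\in\Y$ directly. Otherwise I will apply Corollary~\ref{c:finbasic3} to $P_1$ via a secondary induction: the maximal subgroups of $P_1$ are either $P_2$ itself (abelian, hence in $\Y$) or of the form $\langle t\rangle\Phi(P_1)$ for suitable $t\in P_1\setminus P_2$, whose structure is tightly constrained by the way $P_1/P_2$ acts on $P_2$ inside $G$. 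The careful case analysis of these inner maximal subgroups, together with the verification that $Z(P_1)\le\Phi(P_1)$ (which ultimately comes from $Z(P_1)\le C_G(P_2)$ meeting $P_2$ in a proper subgroup), is the technical heart of the argument.
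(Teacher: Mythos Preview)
Your treatment of $p=2$ and of the maximal subgroups $M\ne P_1$ is fine and essentially agrees with the paper (the paper cites that $2$-groups of maximal class are metacyclic, and quotes \cite[Corollary~3.3.6]{Lee02} for the fact that each $M\ne P_1$ again has maximal class, but your direct arguments work just as well).

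The genuine gap is your handling of $P_1$. What you have written there is a plan, not a proof: you announce a ``secondary induction'' and a ``careful case analysis of these inner maximal subgroups'' without saying what the inductive parameter is, what the maximal subgroups of $P_1$ actually look like, or why they lie in $\Y$. Your sketch for $Z(P_1)\le\Phi(P_1)$ (``$Z(P_1)\le C_G(P_2)$ meeting $P_2$ in a proper subgroup'') is not an argument, and the structure of the groups $\langle t\rangle\Phi(P_1)$ is never analyzed. As it stands, the $P_1$ case is simply missing.

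The paper avoids all of this by invoking two sharp structural facts specific to $3$-groups of maximal class with $n\ge 5$: first, $|P_1'|=3$ by \cite[Theorem~3.4.3]{Lee02}; second, $P_1^3=P_3$ by \cite[Corollary~3.3.6]{Lee02}, so $\Phi(P_1)=P_1'P_1^3=P_3$ and $P_1$ is $2$-generated. By Lemma~\ref{l:minnonab} these two conditions force $P_1$ to be \emph{minimal non-abelian}, hence trivially in $\Y$. This replaces your open-ended case analysis with a two-line structural observation; you should use it.
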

\begin{proof}
  The $2$-groups of maximal class are classified and all metacyclic, see \cite{Bla58}, hence they lie in $\Y$ by Proposition \ref{p:metacyclic}. Now let $G$ be a 3-group of maximal class of order $3^n$ with chief series $G>P_1>\ldots>P_n=1$ as defined above. We proceed by induction on $n$. It follows from Lemma \ref{l:smallppower} that $G\in\Y$ if $n\leq 4$, so let $n\geq 5$ in the following.  By Lemma \ref{l:pmaxclass}, we can assume that all maximal subgroups are non-abelian. It follows from \cite[Corollary 3.3.6]{Lee02} that every maximal subgroup $M<G$ either has maximal class, or  $M=P_1$. In the first case, $M\in\Y$ by the induction hypothesis; also, $Z(G)\leq M$ since otherwise $G=MZ(G)$, and $Z(G)Z(M)\leq Z(G)$ yields a contradiction to $|Z(G)|=3$. If $M=P_1$, then $|M'|=3$ by  \cite[Theorem 3.4.3]{Lee02}. It follows from \cite[Corollary 3.3.6]{Lee02} that $M^3=P_3$. Since $M'\leq P_3$, this yields $\Phi(M)=M'M^{3}=P_3$, so $M/\Phi(M)=P_1/P_3$. This proves that $M$ is a 2-generator group, hence minimal non-abelian by Lemma \ref{l:minnonab}, and $M\in \Y$. Now Corollary \ref{c:finbasic3} proves that $G\in\Y$.
\end{proof}


\begin{theorem}\label{t:maxclass}
Let $G$ be a $p$-group of maximal class of order $p^n$. If $p\in\{2,3\}$ or $n\leq 3$, then $G\in\Y$. If $p\geq 5$ and $n\geq 4$, then $G\in\Y$ if and only if its 2-step centralizer $P_1(G)$ is abelian.
\end{theorem}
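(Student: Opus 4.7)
The ``if'' directions in the theorem are already in place: Lemma~\ref{l:smallppower} settles $n\le 3$, Lemma~\ref{l:3maxclass} settles $p\in\{2,3\}$, and Lemma~\ref{l:pmaxclass} settles the case $p\ge 5$, $n\ge 4$ with $P_1=P_1(G)$ abelian. The remaining task is the converse in the final case: assuming $p\ge 5$, $n\ge 4$, and $P_1$ non-abelian, to prove $G\notin\Y$.

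A preliminary reduction eliminates $n=4$ altogether. When $n=4$, the definition $P_1=C_G(P_2/P_4)$ reads $P_1=C_G(P_2)$, so $P_2\le Z(P_1)$; together with $|P_1/P_2|=p$, this makes $P_1/Z(P_1)$ cyclic and hence $P_1$ abelian, so there is nothing to prove. So one may assume $n\ge 5$. Writing $\ell$ for the degree of commutativity as in Remark~\ref{r:maxclass}, the inclusion $[P_{n-1-\ell},P_1]\le P_n=1$ gives $P_{n-1-\ell}\le Z(P_1)$; since $P_1/Z(P_1)$ must be non-cyclic (because $P_1$ is non-abelian), one gets $|Z(P_1)|\le p^{n-3}$, whence $\ell\le n-5$ and $|P_1'|=|P_{2+\ell}|\ge p^4$. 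In particular, $P_1'$ contains cyclic subgroups of order $p$ distinct from the unique central subgroup $Z(G)=P_{n-1}$.

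For $n\ge 5$ the plan is then to exhibit a non-abelian subgroup $K\le G$ with $Z(G)\not\le K$, which forces $G\notin\Y$ by Lemma~\ref{l:finbasic1} (applied to the subgroup-closed class of subgroups of $G$). Using the standard generators $s\in G\setminus P_1$ and $s_1\in P_1\setminus P_2$ of Remark~\ref{r:maxclass} and the chain $s_i=[s_{i-1},s]$, I would select indices $i,j$ so that $[s_i,s_j]$ lies in $P_{i+j+\ell}$ but outside $P_{n-1}$, and set $K=\langle s_i,s_j\rangle$; then $K$ is non-abelian, and $\langle[s_i,s_j]\rangle$ meets $P_{n-1}$ trivially, so with care $K$ itself avoids $P_{n-1}$.

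The main obstacle is to ensure that such a pair $(i,j)$ always exists and that the resulting $K$, once closed under products and iterated commutators, still misses $Z(G)$; this reduces to tight bookkeeping with the commutator filtration $(P_k)$, and splits naturally according to whether the degree of commutativity is positive (automatic when $n\ge p+2$) or zero (relevant only for the small cases $n\in\{p,p+1\}$), leaning on Blackburn's analysis of maximal class $p$-groups as recalled in Remark~\ref{r:maxclass} and in \cite[Chapter~3]{Lee02}.
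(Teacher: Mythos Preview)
Your treatment of the ``if'' directions is correct, and your handling of $n=4$ is actually cleaner than the paper's (which appeals to Huppert's classification): the observation that $P_1=C_G(P_2)$ forces $P_1/Z(P_1)$ cyclic is a nice shortcut.

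However, the remainder is only a plan, and it has both a numerical slip and a genuine structural gap. First, from $P_{n-1-\ell}\le Z(P_1)$ and $|Z(P_1)|\le p^{n-3}$ you get $p^{\ell+1}\le p^{n-3}$, i.e.\ $\ell\le n-4$, not $n-5$. More seriously, the assertion $|P_1'|=|P_{2+\ell}|$ is not justified: the definition of $\ell$ gives only $P_1'\le P_{2+\ell}$, and maximality of $\ell$ says merely that \emph{some} $[P_i,P_j]\not\le P_{i+j+\ell+1}$, not that $[P_1,P_1]=P_{2+\ell}$. So the claim ``$P_1'$ contains cyclic subgroups of order $p$ distinct from $Z(G)$'' does not yet follow.

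The principal gap is the one you yourself flag: producing $K=\langle s_i,s_j\rangle$ non-abelian with $Z(G)\not\le K$. Even if $[s_i,s_j]\in P_{i+j+\ell}\setminus P_{n-1}$, the subgroup $K$ may well pick up $P_{n-1}$ through higher commutators or $p$-th powers, and without further structural control there is no evident way to rule this out. The paper circumvents this by arguing in the opposite direction: it takes a minimal counterexample $G\in\Y$ with $P_1$ non-abelian, and first shows (by passing to $M=\langle s,P_2\rangle$, which again has maximal class with $P_1(M)=P_2$) that $P_2=G'$ must be abelian. This reduction to the metabelian case is the key step you are missing; with it, one can compute $\ell$ exactly via the three-subgroup lemma, and then a case split on $n\le p+1$ versus $n>p+1$ (and $\ell=1$ versus $\ell\ge 2$) yields precise contradictions by bounding $|\Omega_1|$ or exponents in a carefully chosen $2$-generator subgroup $H$ that the $\Y$-hypothesis forces to contain $Z(P_1)$. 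Your direct-construction strategy might be salvageable, but it would almost certainly require the same metabelian reduction first.
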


\begin{proof}
By Lemmas \ref{l:3maxclass} and \ref{l:smallppower}, it suffices to consider $p\geq 5$ and $n\geq 4$. For $n=4$ the claim follows from the classification in \cite[Satz 12.6]{Hup67}, so let $n\geq 5$. Let $G$ be of maximal class and define $G>P_1>\ldots>P_n=1$  and $s,s_1,\ldots,s_{n-1}$ as in Remark \ref{r:maxclass}. Clearly, if $P_1$ is abelian, then $G$ has an abelian maximal subgroup, hence $G\in\Y$ by Lemma \ref{l:pmaxclass}. 

For the converse, suppose that $G$ is a counterexample of smallest order. Choose $s\in G\setminus P_1(G)$ with $|C_G(s)|=p^2$ and $s^p\in Z(G)$, which is possible by \cite[Hilfssatz III.14.13]{Hup67}, and consider $M=\langle s,P_2\rangle$. It is easy to see that $M\in\Y$ has maximal class. Since $[P_2,P_3]\leq P_5$, it follows that $P_1(M)=P_2$. If $P_2$ is nonabelian, then this yields a contradiction to our choice of $G$; thus $P_2=G'$ is abelian and $G$ is metabelian. It follows from \cite[Theorem 2.10]{Bla58} that $G$ has positive degree of commutativity $\ell>0$, cf.\ \cite[p.\ 74]{Bla58}; in particular, if $n>p+1$, then $\ell\geq n-p-1$, see \cite[Theorem 3.10]{Bla58}. Recall that we assume  $P_1$ is nonabelian, thus $Z(P_1)=P_m$ for some $m\in\{2,\ldots,n-2\}$. Since $[P_1,G,P_{m-1}]=1=[G,P_{m-1},P_1]$, the three-subgroup lemma \cite[Proposition 1.1.8]{Lee02} shows that $[P_{m-1},P_1,G]=1$, hence $[P_1,P_{m-1}]=P_{n-1}=Z(G)$. The same argument and an induction can be used to show that $[P_1,P_{m-i}]\leq P_{n-i}$ for all $i=1,\ldots,m-1$, which implies that $\ell=n-m-1$. Define $H=\langle x,y\rangle$ for some $x\in P_1\setminus P_2$ and $y\in P_{m-1}\setminus P_m$. Note that $[x,y]\in P_{1+m-1+\ell}=P_{n-1}$, and $[x,y]\ne 1$ since $P_2$ is abelian and $P_m=Z(P_1)$; thus, $H'=Z(G)=P_{n-1}$. Since $G\in\Y$, it follows that $Z(P_1)\leq H$, thus $P_{m-1}=\langle y,P_m\rangle \leq H$. Since $\ell=n-m-1>0$, and so $n\geq m+2$, this implies that $P_{n-2}\leq P_{m-1}\leq H$.

First, let $n\leq p+1$, so that  $G/Z(G)$ has exponent $p$ by \cite[Proposition 3.3.2]{Lee02}. Since $H$ is a 2-generator group and $Z(G)=H'$, it follows that $Z(G)=\Phi(H)$, hence  $|H/Z(G)|\leq p^2$. Now $Z(G)<P_{n-2}\leq Z(H)\leq H$ implies that  $|H:Z(H)|\leq p$, so $H$ is abelian. This is a contradiction. 

Second, consider $n>p+1$ and $\ell\geq 2$. Now  \cite[Corollary 3.3.6]{Lee02} shows that $P_{m-1}/P_{m+p-2}$ is a subgroup of $\Omega_1(H/P_{m+p-2})$ of  order $\min\{p^{p-1},p^{n-m+1}\}=\min\{p^{p-1},p^{\ell+2}\}\geq p^4$. But $H/P_{m+p-2}$ is a 2-generator $p$-group whose derived subgroup has order at most $p$, and so $|\Omega_1(H/P_{m+p-2})|\leq p^3$, a contradiction.

Lastly, consider $n>p+1$ and $\ell=1$. As mentioned above, $\ell\geq n-p-1$, which implies that $n=p+2$ and $m=p$.  Thus, $Z(P_1)=P_p$, and   \cite[Corollary 3.3.6]{Lee02} shows that  $x^p\in P_p\setminus P_{p+1}$ and $W=P_{p-2}$ has exponent $p$; recall $p\geq 5$. Since $P_2=G'$ is abelian and $x\in P_1\setminus P_2$, it follows that $C_W(x)=Z(P_1)\cap W =P_p$, hence  $|\{[x,w]\mid w\in W\}|=|\{x^w\mid w\in W\}|=|W:C_W(x)|=p^2$. Together with $\{[x,w]\mid w\in W\}\subseteq P_p$, this implies that $P_p=\{[x,w]\mid w\in W\}$. In particular, there is $w\in W$ with $[x,w]=x^p$, which implies that $J=\langle x,w\rangle$ is non-abelian with order $p^3$ and exponent $p^2$, and so $|\Omega_1(J)|=p^2$. It follows from  $G\in \Y$ that $P_p\leq J$, thus $\langle w,P_p\rangle \leq \Omega_1(J)$ and $|\Omega_1(J)|\geq p^3$. This final contradiction completes the proof.
\end{proof}


We end this section with a classification of the $p$-groups in $\Y$ of exponent~$p$.

\begin{theorem}
Let $G\in\Y$ be a finite $p$-group of exponent $p$. If $|G|>p^p$, then $G$ is elementary abelian. Otherwise, either $G$ is elementary abelian, or  $G$ has maximal class and an elementary abelian subgroup of index $p$.
\end{theorem}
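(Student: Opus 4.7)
The plan is induction on $|G| = p^n$, leveraging Lemma \ref{lem:minnonab} (classification of minimal non-abelian $p$-groups), Lemma \ref{l:smallppower} (groups of order $p^4$ in $\Y$), and Theorem \ref{t:maxclass}. First I reduce to $p$ odd and $G$ non-abelian: if $p=2$ or $G$ is abelian, exponent $p$ trivially yields elementary abelian. From Lemma \ref{lem:minnonab}, the only minimal non-abelian $p$-group of exponent $p$ for $p$ odd is the Heisenberg group $H_p$ of order $p^3$ (the $K_3$ case with $m=n=1$), since $Q_8$ has exponent $4$, $K_2$ has exponent at least $p^2$, and $K_3$ has exponent $p^{\max(m,n)}$. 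For any minimal non-abelian $K \leq G$, $Z(G) \leq C_G(K) \leq K$ forces $Z(G) \leq K \cap C_G(K) = Z(K) = K' \cong C_p$, so $|Z(G)| = p$. The base case $n \leq 3$ yields $G = H_p$, which has maximal class and an elementary abelian subgroup of order $p^2$.

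For $n \geq 4$ I argue as follows. If every maximal subgroup of $G$ were abelian, $G$ would be minimal non-abelian, hence $G = H_p$ of order $p^3$, contradicting $n \geq 4$. So there exists a non-abelian maximal $M < G$, and by induction $|M| \leq p^p$ (giving $n \leq p+1$), $M$ has maximal class, and $M$ has an elementary abelian maximal subgroup $A$ of order $p^{n-2}$. The case $n = 4$ I handle directly with Lemma \ref{l:smallppower}: $G$ is either of maximal class (in which case Theorem \ref{t:maxclass} furnishes the elementary abelian $P_1(G)$) or $\Phi(G) = Z(G)$. The latter would make $|G/Z(G)| = p^3$ carry a non-degenerate alternating form $G/Z(G) \times G/Z(G) \to Z(G) \cong C_p$ induced by commutation, which is impossible on a $3$-dimensional $\F_p$-space. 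For $p = 3$ this also disposes of $n = p+1 = 4$, since no $2$-generated group of exponent $3$ has order exceeding $|B(2,3)| = 27$, so no maximal class group of order $81$ and exponent $3$ exists.

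For $n \geq 5$, as $|M| \geq p^4$, $A$ is the unique abelian maximal subgroup of $M$ and therefore characteristic in $M$, so $A \trianglelefteq G$. Using $|A| \geq p^3 > p = |Z(G)| = |Z(M)|$ and the analogous constraint for any maximal-class candidate, I rule out $C_G(A) \in \{G, M\}$ and $C_G(A)$ being a non-abelian maximal subgroup of $G$ (such would force $A \leq Z(M')$, contradicting maximal class of $M'$). So either $C_G(A)$ is itself an abelian maximal subgroup of $G$, elementary abelian of order $p^{n-1}$ and giving $A'$ directly, or $C_G(A) = A$. In the latter case $G/A \cong C_p^2$ acts faithfully on $A \cong \F_p^{n-2}$ by commuting operators $I + N_m$, $I + N_g$; applying induction to both $M = A\langle m \rangle$ and $A\langle g \rangle$ forces each of $N_m$, $N_g$ to be a single Jordan block of size $n-2$. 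Writing $N_g = c_1 N_m + c_2 N_m^2 + \cdots$ with $c_1 \neq 0$ and choosing $i \equiv -c_1^{-1} \pmod p$, the operator $N_{mg^i}$ lies in $N_m^2\,\F_p[N_m]$ and is non-zero (else $mg^i \in C_G(A) = A$, absurd), so $\dim \ker N_{mg^i} \geq 2$ and $A\langle mg^i \rangle$ is a non-abelian maximal subgroup of $G$ not of maximal class---contradicting the inductive hypothesis. Hence $G$ must have an elementary abelian maximal subgroup $A'$.

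With $A'$ elementary abelian of order $p^{n-1}$ and $G = A'\langle s \rangle$, $s$ acts on $A'$ as $I + N_s$, and $|Z(G)| = p^{\dim \ker N_s} = p$ forces $N_s$ to be a single Jordan block of size $n-1$; thus $G$ has nilpotency class $n-1$, i.e., maximal class. A direct computation gives $(as)^p = N_s^{p-1}(a)$ for $a \in A'$, so exponent $p$ of $G$ requires $N_s^{p-1} = 0$, equivalently $n \leq p$; this rules out $n = p+1$ definitively. The main obstacle is the sub-case $C_G(A) = A$ in the inductive step, where one must carefully exploit the commuting Jordan-block structure of $N_m$, $N_g$ to construct a maximal subgroup of $G$ violating the inductive hypothesis.
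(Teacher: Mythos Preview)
Your proof is correct and follows the same inductive skeleton as the paper's, but the two key technical steps are handled by genuinely different arguments.

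To produce an elementary abelian maximal subgroup of $G$ (for $n\ge 5$), the paper assumes none exists, so every maximal $M$ has maximal class; it then shows that the abelian subgroups $N=P_1(M)$ and $N^*=P_1(M^*)$ coincide for any two maximal subgroups (otherwise $NN^*$ is either all of $G$, forcing $|G:G'|\ge p^4$, or a maximal subgroup $L$ with $|L:Z(L)|\le p^2$, both impossible), whence $N=\Phi(G)=\gamma_2(G)$ and $G$ itself has maximal class, contradicting Theorem~\ref{t:maxclass}. You instead fix one $A=P_1(M)\unlhd G$, run a case analysis on $C_G(A)$, and in the residual case $C_G(A)=A$ translate the faithful action of $G/A\cong C_p^2$ on $A\cong\F_p^{\,n-2}$ into commuting nilpotent endomorphisms; using that the centraliser of the regular nilpotent $N_m$ is $\F_p[N_m]$, you manufacture a third maximal subgroup $A\langle mg^i\rangle$ whose action has kernel of dimension $\ge 2$, violating the inductive hypothesis. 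For the bound $n\le p$ once an abelian maximal subgroup exists, the paper invokes the identity $(ss_1)^p=s_p$ from the commutator calculus of maximal class groups (positive degree of commutativity), whereas you compute $\sum_{j=0}^{p-1}(I+N_s)^j=N_s^{p-1}$ directly via the hockey-stick identity in characteristic $p$.

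Your linear-algebraic route is more self-contained: for $n\ge 5$ it avoids both Theorem~\ref{t:maxclass} and the finer Blackburn/Leedham-Green--McKay structure theory, at the price of the Jordan-block bookkeeping. The paper's route is shorter and stays purely group-theoretic, but leans on that literature throughout. Both arguments ultimately hinge on the same facts: $|Z(G)|=p$, the characteristicness of $P_1(M)$ in a maximal-class $M$, and the impossibility of exponent $p$ for a regular-nilpotent action of size $\ge p$.
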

\begin{proof}
Clearly, if $G$ is abelian, then $G$ is elementary abelian. Thus, in the following, suppose that $G$ is non-abelian. By Lemma \ref{lem:minnonab}, if $p>2$, then every minimal non-abelian $K\leq G$ must be extra-special of order $p^3$ and $Z(G)=Z(K)\cong C_p$; if $p=2$, then there is no minimal non-abelian subgroup of exponent $2$, hence $G$ is elementary abelian. Thus, in the following let $p>2$ and $|Z(G)|=p$; we prove the assertion by induction on the order of $G$. 

By Lemma \ref{l:smallppower}, our claim is true if $|G|$ divides $p^3$; if $|G|=p^4$, then the claim follows from the known classification of groups of order $p^4$, see \cite[Satz 12.6]{Hup67}. So in the following we discuss the case $n\geq 5$. By the induction hypothesis, each maximal subgroup $M<G$ is either elementary abelian, or has maximal class and $M\cong \langle h\rangle \ltimes C_p^{n-2}$. Note that the latter can only happen if  $n\leq  p+1$, since otherwise $|M|=p^{n-1}>p^p$ and then the induction hypothesis forces  $M$ to be  elementary abelian.

Suppose $G$ has an abelian maximal subgroup $M$. Since $|Z(G)|=p$, it follows from \cite[Exercise 4, p.\ 27]{Ber09} that $G$ has maximal class. Define $G>P_1>\ldots>P_{n}=1$ and $s,s_1,\ldots,s_{n-1}$ as in Remark \ref{r:maxclass}.  Note that $P_1=M$ since $M$ is abelian and $P_1=C_G(P_2/P_4)$, hence $G$ has positive degree of commutativity. If $n>p$, then $s_p\in P_p\setminus P_{p+1}$ by \cite[Lemma 3.2.4]{Lee02}. Now  \cite[Corollary 1.1.7(i)]{Lee02} yields $(ss_1)^p=s_p\ne 1$, which is a contradiction to $(ss_1)^p=1$. This proves that if $|G|>p^p$ and $G$ has a maximal subgroup which is elementary abelian, then $G$ is elementary abelian.

Now suppose  $G$ has no elementary abelian maximal subgroup, and so $n\leq p+1$ as shown above. If $M<G$ is maximal, then $M$ has maximal class and an elementary abelian subgroup $N<M$ of order $p^{n-2}$. Observe that $N=P_1(M)$ is characteristic in $M$, hence $N\unlhd G$. Let $M^\ast\ne M$ be maximal subgroup of $G$, and define $N^\ast=P_1(M^\ast)$. Note that $N^\ast\unlhd G$ is abelian, and
\[(\ast)\quad |NN^\ast:N\cap N^\ast|=|NN^\ast:N||N:N\cap N^\ast|=|NN^\ast:N||NN^\ast:N^\ast|.\]
Suppose $N\ne N^\ast$, so $NN^\ast$ has index $1$ or $p$ in $G$. If $G=NN^\ast$, then $G'=[N,N^\ast]\leq N\cap N^\ast$ since $N$ and $N^\ast$ are abelian and normal in $G$;  now $|G:G'|\geq p^4$ by ($\ast$), contradicting $|G:M'|=p^3$. Thus, if $N\ne N^\ast$, then $L=NN^\ast< G$ is maximal. Note that $N\cap N^\ast\leq Z(L)$, and $|L:Z(L)|\leq |L:N\cap N^\ast|= p^2$ by ($\ast$). Since $L$ has maximal class, $|L|\leq p^3$, which contradicts $|G|\geq p^5$. In conclusion, we have proved $N=N^\ast$; in particular, $N$ is contained in every maximal subgroup of $G$, and so $N=P_1(M)=\Phi(G)=\gamma_2(G)$. An induction on $i$ now proves that $\gamma_i(G)=\gamma_{i-1}(M)$ for all $i\geq 3$, hence $G$ has maximal class. By Theorem \ref{t:maxclass}, the group $G$ has an abelian maximal subgroup -- a contradiction to our assumption. In conclusion, $G$ has an elementary abelian maximal subgroup, and the claim follows.
\end{proof}
   
\section*{Acknowledgement}
\noindent The authors wish to express their gratitude to the anonymous referee(s) for their very thorough reading, and for suggesting shorter proofs. In particular, the proofs of  Theorem \ref{t:nilpotent} and Theorem \ref{t:maxclass} have been shortened significantly due to alternative arguments given by the referee(s).

\section*{References}

\end{document}